\LARGE \setlength{\baselineskip}{10pt}
\theoremstyle{plain}
\newtheorem{lemma}{Lemma}[section]
\newtheorem{definition}[lemma]{Definition}
\newtheorem{theorem}[lemma]{Theorem}
\newtheorem{prop}[lemma]{Proposition}
\newtheorem{Remark}[lemma]{Remark}
\newtheorem{assume}[lemma]{Assumption}
\numberwithin{equation}{section} 
\title{\huge Robust Consumption Portfolio Optimization with Stochastic Differential Utility\thanks{This paper is supported by National Key R\&D Program of China (No.2018YFA0703900) and National Natural Science Foundation of China (Nos.11631004, 11701371, 11871163)
.}}
\author{ \Large \rm Jiangyan Pu \footnote{School of Finance,  Shanghai Lixin University of Accounting and Finance, 
Shanghai 201209, China. (Email:
\texttt{20139048@lixin.edu.cn}
)}
~~~and~~~\Large Qi Zhang \footnote{School of Mathematical Sciences, Fudan
University, Shanghai 200433, China. (Corresponding author. Email:
\texttt{qzh@fudan.edu.cn})} \footnote{Laboratory of Mathematics for Nonlinear Science, Fudan University, Shanghai 200433, China.}
}
\date{}
\begin{document}
\maketitle

\textbf{Abstract.} This paper examines a continuous time intertemporal consumption and portfolio choice problem with a stochastic differential utility preference of Epstein-Zin type for a robust investor, who worries about model misspecification and seeks robust decision rules. We provide a verification theorem which formulates the Hamilton-Jacobi-Bellman-Isaacs equation under a non-Lipschitz condition. Then, with the verification theorem, the explicit closed-form optimal robust consumption and portfolio solutions to a Heston model are given. Also we compare our robust solutions with the non-robust ones, and the comparisons shown in a few figures coincide with our common sense.

\textbf{Key words:} stochastic differential utility, robust control, stochastic
differential games, HJB(I) equation, non-Lipschitz condition, Heston model.


\section{Introduction}

Over the past several decades, the optimal consumption and investment problems have been one of hot topics in finance. Looking back in history, we can see that the research development of this topic is based on the developments of the financial theory and tools, like martingale theory, dynamic programming principe, utility, etc. The portfolio selection problem related work can be traced back to  Markowitz \cite{Markowitz} in 1952.
Later Merton \cite{Merton1969}, \cite{Merton1971} first solved the multiperiod expected utility model in continuous time by using dynamic programming in 1969 and 1971. Then, Cox and Huang \cite{Cox}, Karatzas \cite{Karatzas} developed the general martingale approach to consumption and portfolio selection.
Stochastic differential utility (SDU), another mile stone, was  introduced in 1992 by Duffie and Epstein \cite{Duffie1992} as a continuous time limit of recursive utility of Kreps and Porteus  \cite{Krep}, Epstein and Zin \cite{Epstein}. Since the recursive utility can differentiate between the coefficient of relative risk aversion and the elasticity of intertemporal substitution (EIS), it receives more and more attentions.

Looking back on the literature, there are two main methods to solve the optimal consumption-portfolio problem with SDU.
One is called utility gradient approach (or martingale method). Using this approach, Duffie and Skiadas \cite{Duffie1994}, Schroder and Skiadas \cite{Schroder}  generalized the time-separable expected utility in \cite{Cox} and \cite{Karatzas}, whose basic idea is to utilize market completeness to separate the computation of an optimal consumption plan from the corresponding trading strategy.
Then this approach was extended to the trading constrained case by Schroder and Skiadas \cite{Schroder2}. In 2001, El Karoui, Peng and Quenez \cite{el} used the first-order condition (FOC) in a complete market to include nonlinear wealth dynamics under the generalized stochastic differential utility (GSDU), where FOC for the optimal consumption plan is the essence of the gradient approach. The other method to deal with SDU is the dynamic programming approach (or Hamilton-Jacobi-Bellman (HJB) equation approach). This approach turns out to be very powerful as it provides an explicit characterization of optimal strategy in terms of the unique solution of a PDE. With the help of HJB equation, lots of researchers solved the optimal consumption and portfolio problems with SDU, see e.g. Kraft, Seifried and Steffensen \cite{Kraft},  Kraft, Seifeiling and Seifried \cite{Kraft2} to name but a few. In 2017, Xing \cite{Xing} put forward backward stochastic differential equation (BSDE) method to characterize optimal consumption and investment strategies.
However, the bulk of literature, including \cite{Kraft},
\cite{Kraft2}, \cite{Xing}
just mentioned, assumes that the investors have complete confidence in their models.

In 2000, some significant progresses were made in Anderson, Hansen and Sargent \cite{Anderson}, which set up the so-called robust consumption and portfolio choice problems.
In the model uncertainty formulation, the decision maker has a probabilistic benchmark in the sense that he/she ``believes" that the probability distribution on the state space is a given distribution but is not totally confident about this. Hence the priors for the decision maker are the combination of the ``probabilistic benchmark" on one hand and ``anything can happen" on the other hand.
In particular, the decision maker considers such a circumstance, in which he/she may use the relative entropy to measure a distance between a reference model and a true model, and then choose the ``worst" model of underlying assets to find his optimal consumption and investment rules.
Since then, a growing body of literature, like Maenhout \cite{Maenhout04, Maenhout06}, Daniel and Shied \cite{Schied}, Bo and Capponi \cite{bo-ca}, Yang, Liang and Zhou \cite{Yang} began to analyze implications of model uncertainty for asset pricing and portfolio choices.
 Recently, several extensions about robustness were made in different aspects: (i) optimal investment under correlation, equicorrelation,
 variance-covariance or volatility ambiguity, such as 
 Fouque,  Pun and Wong \cite{Fouque}, Han and Wong \cite{Han}, Ismail and Pham \cite{Ismail}, Pun \cite{Pun2};
 (ii) an economy modelled by a multivariate stochastic volatility model, especially the principle component stochastic volatility (PCSV) model, which nests Heston's model as a special case (in one dimension, i.e., one risky-asset case). PCSV was initiated in Escobar, Gotz, Seco and  Zagst \cite{Escobar} and investigated in Bergen, Escobar, Rubtsov and Zagst \cite{Bergen} and Yan, Han, Pun and Wong \cite{Yan}.
   However, aforementioned works about consumption-investment problems were not investigated under SDU preference. 

Other researchers considered intertemporal consumption and portfolio choice for an investor with SDU preferences and took model uncertainty into account, such as Liu \cite{Liu}, in which the author solved HJB equation by getting numerical solutions, and Ait-Sahalia and Matthys \cite{Ait}, in which the coefficients of risky asset were  determined.
In this paper,
we analyze  that an investor has a preference of Epstein-Zin type and the referred stock model follows an Heston model whose volatility and extra return are driven by an external stochastic factor process. During the period, the investor worries that the model depicting investment opportunities is misspecified in the sense of Anderson, Hansen and Sargent \cite{Anderson}, and thus the investor prefers to seek robust consumption and portfolio.

As we know, the verification theorem is crucial to deduce the HJB equation which gives the possibility to produce an explicit optimal solution to consumption-portfolio problem. To study our model uncertainty control problem with non-Lipschitz SDU preferences, our first step is to extend the verification theorem to the game problem with monotonic condition and two controls.
To realize above ideas, we find that the technique of the stochastic Gronwall-Bellman inequality is an efficient tool. In the original SDU paper \cite{Duffie1992}, this inequality had been used to prove the verification theorem under the assumption that the aggregator is Lipschitz with respect to the variable.  Then Kraft, Seifried and Steffensen \cite{Kraft} improved the method of stochastic Gronwall-Bellman inequality and applied it to the verification theorem with non-Lipschitz but monotonic aggregator.
However, the verification theorem in \cite{Kraft} serving for stochastic control problem with one control cannot be immediately applied to the stochastic game problem with two controls. So the establishment of the verification theorem to the game problem with monotonic condition is one contribution of our paper.



Needless to say, the explicit solution of control problem is one of the main concerned issues in control problems. Another contribution of our paper is that we eventually get the explicit closed-form solutions of optimal robust portfolio and consumption-wealth ratio in an Heston model, by an application of Hamilton-Jacobi-Bellman-Isaacs (HJBI) equation. Moreover, in comparison between our explicit solutions of robust investors and those of non-robust investors,  it turns out that more robust investors are more cautious than the non-robust investors and they put lower proportion of the wealth into the consumption and stock, which coincides with our common sense.


This paper is organized as follows. In Section 2, we prove our verification theorem. In Section 3, the financial market is set up and the optimization problems for both investors of non-robustness and robustness are stated respectively, and the robust consumption and portfolio decisions are given. Finally, we show the applications of our study to Heston model and the comparison results between our robust solutions and non-robust ones in Section 4.

\section{Verification theorem}
In this section, we prove the verification theorem about two controls for the stochastic game problem with non-Lipschitz aggregator, different from \cite{Kraft}, in which the verification theorem for stochastic control problem with one control was analyzed.

To begin with, we state our general settings. Given a complete probability space $(\Omega, \mathscr{F}, \mathbb{P})$ and a fixed terminal time $T < \infty$, let $\{B_s\}_{s\in[0,T]}$ be a $d$-dimensional Brownian motion on the probability space. Denote by $(\mathscr{F}_s)_{0\leq s\leq T}$ the nature filtration generated by $\{B_s\}_{s\in[0,T]}$ with $\mathscr{F}_0$ containing all $\mathbb{P}$-null sets of $\mathscr{F}$.

We consider that the state process ${X}=\{{X}_s\}_{s\in[t,T]}$ takes values in $\Xi \subseteq \mathbb{R}^n$ with the dynamics
\begin{equation}\label{eqofx}
dX_s=b(s,X_s,u_s, v_s)ds+\sigma(s,{X}_s,u_s,v_s)dB_s, \quad {X}_t=x,
\end{equation}
where $b:[0,T] \times \Xi \times U \times V \rightarrow \mathbb{R}^n$, $\sigma:[0,T]\times \Xi \times U \times V \rightarrow \mathbb{R}^{n\times d}$ are measurable functions, $U \in \mathbb{R}^{m}$ and $V \in \mathbb{R}^{l}$, $t \in [0,T]$ is an  initial time, $x \in \Xi$ is an initial state, and $u$, $v$ are controls belonging to the admissible control sets defined below.

 \begin{definition}
 For $t \in [0,T]$,
 let $\mathcal{U}_t=\mathcal{U}_t(x)$, $x \in\Xi$, be the $t$-admissible control set of $(\mathscr{F}_t)_{0\leq t\leq T}$-adapted feedback strategies $u=\{u_s\}_{s\in[t,T]}=\{u(s,X_s)\}_{s\in[t,T]}$ taking values in $U$, and $\mathcal{V}_t$ be the set of all $(\mathscr{F}_t)_{0\leq t\leq T}$-adapted processes $v=\{v_s\}_{s\in[t,T]}$ taking values in $V$.

  \end{definition}


%
%
%
%
%

A function $u\in\mathcal{U}_t$ (resp. $v\in\mathcal{V}_t$) is a control of a maximizing player (resp. minimizing player), and
a game payoff functional for $u$ and $v$ is defined by
\begin{equation}\label{eqofJ}
J(s,x;u,v)=J_s=\mathbb{E}_s\big[\int_s^T f(r,X_r,u_r,v_r, J_r)dr+\Phi(X_T)\big],\quad s \in [t,T],
\end{equation}
where $f:  [0,T] \times \Xi \times U \times V  \times \mathbb{R} \rightarrow \mathbb{R}$ is the measurable intertemporal aggregator, $\Phi: \Xi \rightarrow \mathbb{R}$ is the terminal cost, and
$\mathbb{E}_s $ is the conditional expectation with respect to $\mathscr{F}_s$.

The control problem we concern with is as below:
\begin{equation}\label{pz8}
{\bf{(P)}} \quad\quad \sup_{u\in \mathcal{U}_t}\inf_{v \in \mathcal{V}_t}\mathbb{E}\big[\int_t^T f(s,X_s,u_s, v_s,J_s)ds+\Phi(X_T)\big].
\end{equation}
Then the value function is formally defined by
\begin{equation*}\label{eqofK}
K(t,x)=\sup_{u\in \mathcal{U}_t}\inf_{v \in \mathcal{V}_t} J(t,x;u,v).
\end{equation*}

Here we allow the case that

(A1)~~for any $u \in \mathcal{U}_t$ and $v\in \mathcal{V}_t$, \eqref{eqofx}--\eqref{eqofJ} has a unique strong solution.

For $w\in C^{1,2}([0,T) \times \Xi)$, define
\begin{equation*}
 \mathcal{L}^{u,v}[w](s,x)=  w_s(s,x)+\langle b(s,x,u,v), w_x(s,x)\rangle  +\frac{1}{2}\text{tr}(\sigma(s,x, u,v)^\top w_{xx}(s,x) \sigma(s,x, u,v)).
 \end{equation*}
We suppose that

(A2)~~for any $(s,x) \in [t,T] \times \Xi$ and $u \in U$, $v \in V $,  there exist measurable functions $v^*: [t,T ] \times \Xi \times U \rightarrow V$ and $u^*: [t,T ] \times \Xi\rightarrow U$ such that
\begin{equation*}\label{optimalpoint}
\begin{aligned}
& v^* \in \arg \min_{v\in V} \{ \mathcal{L}^{u,v}[w](s,x)+f(s,x,u,v,w(s,x)) \},\\
&u^*  \in \arg \max_{u \in U} \{ \mathcal{L}^{u,v^*}[w](s,x)+f(s,x,u,v^*,w(s,x)) \}.\nonumber
\end{aligned}
\end{equation*}

From (A2), it is clear that 
$$\mathcal{L}^{u^*,v^*}[w](s,x)+f(s,x,u^*,v^*,w(s,x))=\sup\limits_{u \in U} \inf\limits_{v \in V} \{\mathcal{L}^{u,v}[w](s,x)+f(s,x,u,v,w(s,x))\}.$$
Note that (A1) and (A2) guarantee that \eqref{eqofx}--\eqref{eqofJ} has a unique strong solution in the cases (i) $u_s=u(s,X_s^{t,x})\in\mathcal{U}_t$, $v_s^*=v^*(s,X_s^{t,x},u(s,X_s^{t,x}))\in\mathcal{V}_t$ and (ii) $u_s^*=u^*(s,X_s^{t,x})\in\mathcal{U}_t$, $v_s=v(s,X_s^{t,x},u^*(s,X_s^{t,x}))\in\mathcal{V}_t$.

Similar to \cite{Kraft}, we also assume that the monotonicity holds, i.e.

(A3)~~there exists $C>0$ such that for any $s,x,u,v\in [t,T] \times \Xi\times U\times V $,
\begin{equation*}
f(s,x,u,v,j_1)-f(s,x,u,v,j_2)\leq C(j_1 - j_2), ~ \text{if} ~ j_1 \geq j_2.~~~~~~~~~~~~~~~~~~~~~~~~~~~~
\end{equation*}


 Our verification theorem is based on the following generalized Skiadas' lemma. See Theorem A.2 in \cite{Kraft}.

\begin{theorem}\label{Generalized Skiadas' lemma}
Let $Y =\{Y_t\}_{t \in [0,T]}$ be a right-continuous adapted process with $Y_T =0$ and $\mathbb{E}[\int_0^T|Y_s|ds] < \infty$. Assume that there exist a progressive process $H=\{H_t\}_{t\in[0,T]}$ and a constant $k \in (0, \infty)$ such that
\begin{equation*}
Y_t=\mathbb{E}_t[\int_t^T H_s ds]\  \text {a.s. and}\ H_t  \geq kY_t  \text{~on~} \{ Y_t \leq 0\}  \text{~a.s. for~} t \in [0,T].
\end{equation*}
Then $Y_t \geq 0 $ for all $t \in [0,T]$ a.s.
\end{theorem}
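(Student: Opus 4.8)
The plan is to reduce the claim to showing that the negative part $Y_t^-:=\max(-Y_t,0)$ has vanishing expectation, and to establish this through a localization by a hitting time followed by a deterministic backward Gronwall--Bellman argument. The deterministic heuristic is transparent: if $y(t)=\int_t^T h(s)\,ds$ with $y(T)=0$ and $h\ge ky$ wherever $y\le 0$, then $e^{ks}y(s)$ cannot dip below zero without contradicting $y(T)=0$; the role of $Y_T=0$ is precisely to guarantee that $Y$ always returns to the nonnegative region by time $T$, so the relevant hitting time is bounded.

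First I would fix $t\in[0,T]$ and introduce the stopping time
\[
\tau:=\inf\{s\in[t,T]:Y_s\ge 0\},
\]
which satisfies $\tau\le T$ a.s. because $Y_T=0$. Since $M_s:=\int_0^s H_r\,dr+Y_s=\mathbb{E}_s[\int_0^T H_r\,dr]$ is a martingale, optional sampling at the bounded stopping time $\tau\ge t$ gives the representation
\[
Y_t=\mathbb{E}_t\Big[\int_t^\tau H_s\,ds+Y_\tau\Big].
\]
Right-continuity of $Y$ yields $Y_\tau\ge 0$ a.s. (on $\{\tau=T\}$ this is $Y_T=0$, and on $\{\tau<T\}$ it follows by approaching $\tau$ from the right through times at which $Y\ge 0$), so $\mathbb{E}_t[Y_\tau]\ge 0$. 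For $s\in[t,\tau)$ one has $Y_s<0$, so the monotonicity-type hypothesis applies and gives $H_s\ge kY_s=-kY_s^-$ there. Combining these facts and enlarging the integration window, I obtain the pointwise estimate
\[
Y_t\ge -k\,\mathbb{E}_t\Big[\int_t^\tau Y_s^-\,ds\Big]\ge -k\,\mathbb{E}_t\Big[\int_t^T Y_s^-\,ds\Big],
\]
and since the right-hand side is nonnegative this upgrades to $Y_t^-\le k\,\mathbb{E}_t[\int_t^T Y_s^-\,ds]$ a.s.

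Taking unconditional expectations and invoking Fubini (licensed by the assumption $\mathbb{E}[\int_0^T|Y_s|\,ds]<\infty$), the function $\phi(t):=\mathbb{E}[Y_t^-]$ satisfies the backward integral inequality $\phi(t)\le k\int_t^T\phi(s)\,ds$, with $\phi\ge 0$ and $\int_0^T\phi(s)\,ds<\infty$. Setting $\Psi(t):=\int_t^T\phi(s)\,ds$, this reads $\Psi'(t)+k\Psi(t)\ge 0$ a.e., so $e^{kt}\Psi(t)$ is nondecreasing; combined with $\Psi(T)=0$ this forces $\Psi\equiv 0$ and hence $\phi\equiv 0$. Therefore $Y_t^-=0$ a.s. for each $t$, and by right-continuity $Y_t\ge 0$ for all $t\in[0,T]$ simultaneously, a.s.

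I expect the main obstacle to be the rigorous justification of the localization identity: one must confirm that $\int_0^{\cdot}H_s\,ds+Y$ is a genuine integrable martingale so that optional sampling at $\tau$ is valid, that $\tau$ is indeed a stopping time (from right-continuity of $Y$), and that $Y_\tau\ge 0$ is read off carefully at the hitting time. Once this representation is in hand, the passage to the negative part and the Gronwall conclusion are routine, the only remaining technical point being the interchange of expectation and time-integral, which is exactly what the integrability hypothesis $\mathbb{E}[\int_0^T|Y_s|\,ds]<\infty$ supplies.
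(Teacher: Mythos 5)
Your proof is correct. The paper itself offers no proof of this statement---it simply cites Theorem A.2 of Kraft, Seifried and Steffensen \cite{Kraft}---and your argument (the hitting time $\tau=\inf\{s\in[t,T]:Y_s\ge 0\}$, optional sampling of the closed martingale $\int_0^{\cdot}H_r\,dr+Y$, the resulting bound $Y_t^-\le k\,\mathbb{E}_t\big[\int_t^T Y_s^-\,ds\big]$, and the backward Gronwall step for $\phi(t)=\mathbb{E}[Y_t^-]$) is essentially the standard proof given in that reference, with the genuine technical points (the d\'ebut theorem for $\tau$ under the usual conditions of the augmented Brownian filtration, and the integrability of $\int_0^T H_s\,ds$ implicit in the well-definedness of the hypothesis) correctly identified.
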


To use the generalized Skiadas' lemma, we further assume

(A4)~~if $w \in C^{1,2}([0,T)\times \Xi)\cap C^{0,0}([0,T]\times\Xi)$ is a solution of the following dynamic programming equation
\begin{numcases}{}\label{dp}
w_t(t,x)+ \sup_{u \in U} \inf_{v \in V} \{ \langle b(t,x;u,v),w_x(t,x)\rangle+\frac{1}{2}\text{tr}(\sigma^\top(t,x,u,v)w_{xx}(t,x)\sigma(t,x,u,v))\nonumber\\
\ \ \ \ \ \ \ \ \ \ \ \ \ \ \ \ \ \ \ \ \ \ \ \ +f(t,x,u,v,w(t,x)) \}=0~~ \text{in}\ [0,T) \times  \Xi,\\
w(T,x)=\Phi(x),~~ x \in  \Xi,\nonumber
\end{numcases}
the local martingale
\begin{equation*}\label{pz2}
\int^\cdot_{t} w_x(s,X_s)^\top \sigma(s,X_s,u_s,v_s)dB_s
\end{equation*}
is a true martingale for any $u \in \mathcal{U}_t$ and $v \in \mathcal{V}_t$;

(A5)~~for any $u \in \mathcal{U}_t$ and $v\in \mathcal{V}_t$, if $(X_s, J_s)_{t \leq s \leq T}$ is the solution of  \eqref{eqofx}--\eqref{eqofJ}, we have $\mathbb{E}[\int^T_{t}|w(s,X_s)-J_s|ds] < \infty$.\\

Then we have the following verification theorem:
\begin{theorem}\label{pz7}
We assume (A1)--(A5).
If the solution $w$ of HJBI equation (\ref{dp}) exists, then $u^*$ and $v^*$ are optimal
in  $\mathcal{U}_t$ and $\mathcal{V}_t$,   $w$ coincides with the value function of the maxmin problem (\ref{pz8}), and minimax identity holds, i.e.
\begin{equation*}
w(t,x) = J(t,x;u^*, v^*)=\sup_{u \in \mathcal{U}_t}\inf_{v \in \mathcal{V}_t}J(t,x;u,v)=\inf_{v \in \mathcal{V}_t}\sup_{u \in \mathcal{U}_t}J(t,x;u,v).
\end{equation*}
\end{theorem}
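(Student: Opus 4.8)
The plan is to compare, via the generalized Skiadas' lemma (Theorem~\ref{Generalized Skiadas' lemma}), the candidate value $w(s,X_s)$ along the state trajectory with the realized payoff $J_s$, for two specific configurations of the controls. Fix $t\in[0,T]$ and $x\in\Xi$. For an arbitrary admissible pair $(u,v)$ I would first apply It\^o's formula to $w(s,X_s)$ on $[s,T]$; since $w(T,\cdot)=\Phi$ and, by (A4), the stochastic integral $\int w_x^\top\sigma\,dB$ is a true martingale, taking conditional expectation gives $w(s,X_s)=\mathbb{E}_s[\Phi(X_T)-\int_s^T \mathcal{L}^{u,v}[w](r,X_r)\,dr]$. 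Subtracting the representation \eqref{eqofJ} of $J_s$, the terminal terms cancel and I obtain, for $Y_s:=w(s,X_s)-J_s$, the representation $Y_s=\mathbb{E}_s[\int_s^T H_r\,dr]$ with integrand $H_r=-\mathcal{L}^{u,v}[w](r,X_r)-f(r,X_r,u_r,v_r,J_r)$ and $Y_T=0$; the right-continuity of $Y$ and the integrability $\mathbb{E}[\int_t^T|Y_s|ds]<\infty$ demanded by the lemma come from continuity of $w$ and from (A5).

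Then I would run two cases. In the first, take $u=u^*$ and $v$ arbitrary: by (A2) the response $v^*(\cdot,u^*)$ attains the inner infimum and $u^*$ the outer supremum, so \eqref{dp} forces $\mathcal{L}^{u^*,v}[w]+f(\cdot,u^*,v,w)\ge0$ pointwise. Working with $\widetilde Y_s:=J_s-w(s,X_s)$, whose integrand is $\mathcal{L}^{u^*,v}[w]+f(\cdot,J_r)$, I split $f(\cdot,J_r)=f(\cdot,w)+[f(\cdot,J_r)-f(\cdot,w)]$; the nonnegative term $\mathcal{L}^{u^*,v}[w]+f(\cdot,w)$ is discarded, while on $\{\widetilde Y_r\le0\}$ (i.e. $J_r\le w$) monotonicity (A3) gives $f(\cdot,J_r)-f(\cdot,w)\ge C\widetilde Y_r$. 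Thus the integrand dominates $C\widetilde Y_r$ there, and Theorem~\ref{Generalized Skiadas' lemma} with $k=C$ yields $\widetilde Y\ge0$, i.e. $J(t,x;u^*,v)\ge w(t,x)$ for every $v$. In the second case, take $u$ arbitrary and $v=v^*(\cdot,u)$: now (A2) and \eqref{dp} give $\mathcal{L}^{u,v^*}[w]+f(\cdot,u,v^*,w)=\inf_{v'}\{\cdots\}\le0$, and the symmetric computation with $Y_s=w(s,X_s)-J_s$ and (A3) produces $H_r\ge CY_r$ on $\{Y_r\le0\}$, whence $w(t,x)\ge J(t,x;u,v^*(\cdot,u))$ for every $u$.

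Combining the two one-sided estimates is then routine: taking $v=\bar v^*:=v^*(\cdot,u^*)$ in the first and $u=u^*$ in the second pins down $w(t,x)=J(t,x;u^*,\bar v^*)$, so $u^*,v^*$ are optimal and $\inf_v J(t,x;u^*,v)=w(t,x)$. Moreover the first estimate gives $\sup_u\inf_v J\ge\inf_v J(t,x;u^*,v)=w$, and the second gives $\inf_v J(t,x;u,v)\le J(t,x;u,v^*(\cdot,u))\le w$ for every $u$, hence $\sup_u\inf_v J\le w$; so $w$ equals the lower value $\sup_u\inf_v J$.

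The hard part, and the step I would treat most carefully, is the final minimax identity $\sup_u\inf_v J=\inf_v\sup_u J$. The inequality $\sup_u\inf_v J\le\inf_v\sup_u J$ is automatic, but the reverse requires upgrading the Stackelberg controls of (A2) to a genuine saddle point with a single minimizer control held fixed, namely $\sup_u J(t,x;u,\bar v^*)\le w$ for the equilibrium response $\bar v^*=v^*(\cdot,u^*)$. This does not follow from the argmin/argmax structure alone and relies on the concave--convex (Isaacs) structure of the Hamiltonian in the maximizer--minimizer variables --- precisely what the robust Epstein--Zin/Heston application supplies. I would either invoke this structure to obtain $\mathcal{L}^{u,\bar v^*}[w]+f(\cdot,u,\bar v^*,w)\le0$ for all $u$ (via the envelope identity $\partial_u\{\mathcal{L}^{u,v^*(\cdot,u)}[w]+f\}|_{u^*}=\partial_u\{\mathcal{L}^{u,\bar v^*}[w]+f\}|_{u^*}=0$ together with concavity in $u$) and rerun the Skiadas comparison with $\bar v^*$ fixed, or verify the Isaacs condition directly, to close the minimax identity.
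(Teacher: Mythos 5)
Your It\^o/Skiadas comparison and the two one-sided ``Stackelberg'' inequalities coincide with the paper's proof: your first case (fix $u^*$, vary $v$) is the paper's Step 2, your second case (vary $u$, respond with $v^*(\cdot,u)$) is its Step 1, and your combination reproducing $w=J(t,x;u^*,\bar v^*)=\sup_u\inf_v J$ is its Step 3 up to \eqref{eqa3}--\eqref{eqa4}. The divergence is only in the final minimax identity. There the paper does \emph{not} invoke any Isaacs/concavity structure or envelope argument: it simply writes \eqref{eqpz16} (by Step 2, $w\le\sup_u J(t,x;u,v)$ for every $v$, hence $w\le\inf_v\sup_u J$) and \eqref{eqpz15}, namely $\inf_v\sup_u J\le\sup_u J(t,x;u,v^*)\le w$, treating $v^*$ as an element of $\mathcal{V}_t$ over which the outer infimum may be taken.

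Your reservation about exactly this step is well founded, and it is the one place where the paper is loose rather than you. Under (A2) the minimizer $v^*=v^*(s,x,u)$ is a \emph{response map}; Step 1 only yields $J(t,x;u,v^*(\cdot,\cdot,u))\le w$, with the minimizer's process changing with $u$. The first inequality of \eqref{eqpz15} is valid only if $v^*$ is a single fixed control held against all $u$; for a $u$-dependent response it amounts to ``$\inf$ over controls $\le$ value of a strategy'', which fails in general. So the paper's inf-sup half is rigorous only when the minimizing response is $u$-independent (or a pointwise saddle/Isaacs condition holds for the Hamiltonian). That is precisely what occurs in the paper's application: the FOC \eqref{foc} gives $v^*=-\eta\,\partial w$, independent of $(c,\pi)$, so the gap is harmless there. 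Your proposed remedy (concavity in $u$ plus the envelope identity, i.e.\ upgrading the Stackelberg pair to a genuine saddle point, then rerunning the Skiadas comparison with $\bar v^*$ fixed) is the standard rigorous fix, but note it imports structure beyond (A1)--(A5); as written, your argument proves the minimax identity only under that extra hypothesis, while the paper claims it from (A1)--(A5) alone via the questionable step you flagged.
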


\begin{proof}
Step 1: For any $u \in \mathcal{U}_t$, we prove
$
J(t,x;u,v^*) \leq w(t,x).
$

%
For any $u\in \mathcal{U}_t$ and $v^* \in \mathcal{V}_t$ , let $\{(\hat{X}_s, \hat{J}_s)\}_{s\in[t,T]}$ be a solution of  \eqref{eqofx}--\eqref{eqofJ} with $u$ and $v^*$. 
By It\^{o} formula, we have
 \begin{eqnarray*}\label{eqito}
w(s,\hat{X}_s) -\hat{J}_s&=&-\mathbb{E}_s[\int_s^T w_t(r,\hat{X}_r)+\langle b(r,\hat{X}_r,u_r,v^*_r), w_x(r,\hat{X}_r)\rangle\nonumber\\
 &&+ \frac{1}{2}\text{tr}(\sigma^\top(r,\hat{X}_r, u_r,v^*_r) w_{xx}(r,\hat{X}_r) \sigma(r,\hat{X}_r, u_r,v^*_r)) + f(r,\hat{X}_r, u_r, v^*_r, \hat{J}_r)dr]\nonumber\\
 &=& -\mathbb{E}_s[\int_s^T \mathcal{L}^{u_r, v^*_r}[w](r,\hat{X}_r) + f(r,\hat{X}_r, u_r, v^*_r, \hat{J}_r)dr].
 \end{eqnarray*}
Hence, it holds that
 \begin{equation*}
 w(s,\hat{X}_s)-\hat{J}_s = \mathbb{E}_s[\int_s^T {\varphi}_r dr],\ \ \ t \leq s \leq T,
 \end{equation*}
 where
 \begin{eqnarray*}
 {\varphi}_r &=& -\{ \mathcal{L}^{u_r, v^*_r}[w](r,\hat{X}_r)+ f(r,\hat{X}_r, u_r, v^*_r, w(r,\hat{X}_r))\}\\
 &&-\{f(r,\hat{X}_r, u_r, v^*_r, \hat{J}_r)-f(r,\hat{X}_r,u_r, v^*_r,  w(r,\hat{X}_r)) \}.
 \end{eqnarray*}
 Using (A2) and \eqref{dp}, we have
 \begin{equation*}
 \begin{aligned}
 &  \mathcal{L}^{u_r, v^*_r}[w](r,\hat{X}_r)+ f(r,\hat{X}_r, u_r, v^*_r, w(r,\hat{X}_r))\\
 \leq & \mathcal{L}^{u^*_r, v^*_r}[w](r,\hat{X}_r)+ f(r,\hat{X}_r, u^*_r, v^*_r, w(r,\hat{X}_r))\\
 =& 0,
 \end{aligned}
 \end{equation*}
 which implies
 \begin{equation*}
{\varphi}_r \geq -\{f(r,\hat{X}_r, u_r,v^*_r, \hat{J}_r)-f(r,\hat{X}_r, u_r,v^*_r, w(r,\hat{X}_r)) \}.
 \end{equation*}
 From (A3), if $\hat{J}_r  \geq w(r,\hat{X}_r)$ it turns out that
\begin{equation*}
f(r,\hat{X}_r, u_r, v^*_r, \hat{J}_r)-f(r, \hat{X}_r, u_r, v^*_r,  w(r, \hat{X}_r))\leq C(\hat{J}_r - w(r,\hat{X}_r)).
\end{equation*}
 Thus if $w(r,\hat{X}_r)-\hat{J}_r \leq 0$ we have
 \begin{equation*}
 \begin{aligned}
 {\varphi}_r & \geq -[f(r,\hat{X}_r,u_r, v^*_r, \hat{J}_r)-f(r,\hat{X}_r,u_r, v^*_r, w(r,\hat{X}_r))]\\
 & \geq C ( w(r,\hat{X}_r)-\hat{J}_r).
 \end{aligned}
 \end{equation*}
Moreover, by (A5), $\mathbb{E}[\int_t^T|w(s,\hat{X}_s)-\hat{J}_s|ds] < \infty$.  Therefore, by Theorem \ref{Generalized Skiadas' lemma} it yields that
\begin{equation*}
w(s,\hat{X}_s)-\hat{J}_s \geq 0,\ \ \ t \leq s \leq T.
\end{equation*}
In particular, taking $s=t$ we have
\begin{equation*}
w(t,x) \geq \hat{J}_t = J(t,x; u,v^*).
\end{equation*}

Step 2: For any $v\in \mathcal{V}_t$, we prove
$
w(t,x) \leq J(t,x;u^*, v).
$

For $u^*\in \mathcal{U}_t$ and any $v\in \mathcal{V}_t$, let $\{(\tilde{X_s}, \tilde{J_s})\}_{s\in[t,T]}$ be a solution of \eqref{eqofx} and \eqref{eqofJ} with $u^*$ and $v$. 
In a similar way to the argument in Step 1, we have
\begin{equation*}
\tilde{J}_s - w(s,\tilde{X}_s)=\mathbb{E}_s[ \int_s^T \tilde{\varphi}_r dr],\ \ \ t \leq s \leq T,
\end{equation*}
where
\begin{eqnarray*}
 \tilde{\varphi}_r&=& \{ \mathcal{L}^{u^*_r, v_r}[w](r,\tilde{X}_r)+ f(r,\tilde{X}_r, u^*_r, v_r, w(r,\tilde{X}_r))\}\\
 &&+\{f(r,\tilde{X}_r, u^*_r, v_r, \tilde{J}_r)-f(r,\tilde{X}_r,u^*_r, v_r,  w(r,\tilde{X}_r)) \}.
\end{eqnarray*}
Using (A2) and \eqref{dp} again, we have
\begin{equation*}
\begin{aligned}
& \mathcal{L}^{u^*_r, v_r}[w](r, \tilde{X}_r)+f(r,\tilde{X}_r,u^*_r,v_r,w(r,\tilde{X_r}))\\
\geq & \mathcal{L}^{u^*_r, v^*_r}[w](r, \tilde{X}_r)+f(r,\tilde{X}_r,u^*_r,v^*_r,w(r,\tilde{X_r}))\\
= & 0,
\end{aligned}
\end{equation*}
which implies
\begin{equation*}
\tilde{\varphi}_r \geq f(r,\tilde{X}_r, u^*_r, v_r, \tilde{J}_r)-f(r,\tilde{X}_r,u^*_r, v_r,  w(r,\tilde{X}_r)).
\end{equation*}
From (A3), if $w(r,\tilde{X_r}) \geq \tilde{J}_r$ it turns out that
\begin{equation*}
f(r,\tilde{X}_r,u^*_r,v_r,w(r,\tilde{X_r}))-f(r,X_r,u^*_r,v_r,\tilde{J}_r)
\leq  C (w(r,\tilde{X_r})-\tilde{J}_r).
\end{equation*}
Thus if $\tilde{J}_r-w(r,\tilde{X_r})\leq 0$ we have
\begin{equation*}
 \begin{aligned}
\tilde{\varphi}_r & \geq f(r,\tilde{X}_r, u^*_r, v_r, \tilde{J}_r)-f(r,\tilde{X}_r,u^*_r, v_r,  w(r,\tilde{X}_r)) \\
 & \geq C (\tilde{J}_r - w(r,\tilde{X_r})).
 \end{aligned}
 \end{equation*}
Moreover, by (A5), $\mathbb{E}[\int_t^T|\tilde{J}_s - w(s,\tilde{X_s})|ds] < \infty$. Therefore, by Theorem \ref{Generalized Skiadas' lemma} it yields that
\begin{equation*}
\tilde{J}_s - w(s,\tilde{X_s}) \geq 0, \ \ \ \ \ t \leq s \leq T.
\end{equation*}
In particular, taking $s=t$ we have
\begin{equation*}
w(t,x)\leq  \tilde{J}_t= J(t,x;u^*, v).
\end{equation*}

Step 3: $u^*\in\mathcal{U}_t$ and $v^*\in\mathcal{V}_t$ are optimal
solutions.

From Steps 1 and 2, we get that $(u^*,v^*)$ is a saddle point of the functional $J(t,x;u, v)$. By Step 1, for any $u\in \mathcal{U}_t$,
$J(t,x;u,v^*) \leq w(t,x)$. Hence for any $u\in \mathcal{U}_t$,
\begin{equation*}\label{eqa1}
\inf_{v\in \mathcal{V}_t}J(t,x;u,v) \leq w(t,x).
\end{equation*}
In other words, we have
\begin{equation*}
\sup_{u \in \mathcal{U}_t}\inf_{v\in \mathcal{V}_t}J(t,x;u,v) \leq w(t,x).
\end{equation*}
By Step 2, for any $v\in \mathcal{V}_t$,
$w(t,x) \leq J(t,x;u^*, v)$. Hence
\begin{equation*}\label{eqa2}
w(t,x) \leq \inf_{v\in \mathcal{V}_t}J(t,x;u^*,v)\leq\sup_{u \in \mathcal{U}_t}\inf_{v\in \mathcal{V}_t}J(t,x;u,v).
\end{equation*}
So
\begin{eqnarray}\label{eqa3}
w(t,x) =\inf_{v \in \mathcal{V}_t} J(t,x;u^*,v)=\sup_{u \in \mathcal{U}_t}\inf_{v \in \mathcal{V}_t}J(t,x;u,v_u).
\end{eqnarray}

On the other hand, by Step 1, we have
\begin{equation}\label{eqpz15}
\inf_{v \in \mathcal{V}_t}\sup_{u \in \mathcal{U}_t}J(t,x;u,v)\leq \sup_{u \in \mathcal{U}_t}J(t,x;u,v^*) \leq w(t,x).
\end{equation}
By Step 2, we have
\begin{equation}\label{eqpz16}
w(t,x) \leq \sup_{u \in \mathcal{U}_t}J(t,x;u,v)\leq \inf_{v \in \mathcal{V}_t}\sup_{u \in \mathcal{U}_t}J(t,x;u,v).
\end{equation}
Combing \eqref{eqpz15} and \eqref{eqpz16}, we have
\begin{equation}\label{eqpz17}
w(t,x)
=\inf_{v\in \mathcal{V}_t}\sup_{u\in \mathcal{U}_t}J(t,x;u,v).
\end{equation}
Hence, by Step 1 and Step 2 again, it yields that
$$
J(t,x;u^*,v^*) \leq w(t,x)\leq J(t,x;u^*, v^*).
$$
So
\begin{eqnarray}\label{eqa4}
w(t,x) = J(t,x;u^*, v^*).
\end{eqnarray}
Combining \eqref{eqa3}, \eqref{eqpz17} with \eqref{eqa4}, we finally have
\begin{eqnarray*}
w(t,x)
 =J(t,x;u^*,v^*)=\sup_{u \in \mathcal{U}_t}\inf_{v \in \mathcal{V}_t}J(t,x;u,v)=\inf_{v \in \mathcal{V}_t}\sup_{u \in \mathcal{U}_t}J(t,x;u,v).
\end{eqnarray*}

 Thus, $u^*$ is the optimal response when the second player chooses $v^*$, and vice versa. Moreover, $w(t,x)$ coincides with the value function of  the maxmin problem (\ref{pz8}).
%
%
%
 \end{proof}

The verification theorem is the key result to solve the robust optimal portfolio and consumption problem in next sections. Even if we leave out the robust settings and only consider the non-robust problem, the proof of above verification is different from that in \cite{Xing} where the comparison theorem of BSDE is applied rather than the dynamic programming principle.

\section{The financial market and robust optimization problem}
In this section, we will introduce the financial model and state our concerned robust optimization problem.
\subsection{The financial market model}
We briefly introduce the financial market in this subsection.
Our discussion is based on the setting of a financial market, in which two assets (or securities) can be traded continuously. One is non-risky asset bond, with its price $\{P_0(s)\}_{s\in [t,T]}$ given by
\begin{equation}\label{eq1}
dP_0(s)=rP_0(s)ds,\quad P_0(t)=1,
\end{equation}
where the constant $r>0$.
The other is risky asset stock, with its price $\{P(s)\}_{s\in [t,T]}$ given by
\begin{equation}\label{eq2}
dP(s)=P(s)[(r+\lambda(s,Y_s^1))ds + \sigma(s,Y_s^1)dB_s].
\end{equation}
Here the stock's excess return and volatility $\lambda$, $\sigma: [0,T]\times \mathbb{R} \longrightarrow \mathbb{R}$ are assumed to be measurable functions depending on time and the state process $\{Y_s^1\}_{s \in [t,T]}$ satisfies
\begin{equation}\label{eq3}
dY_s^1=\alpha(s,Y_s^1)ds+\beta(s,Y^1_s)(\rho dB_s+\sqrt{1-\rho^2}d\hat{B}_s),\quad  Y_t^1=y,
\end{equation}
where $\alpha, \beta:[0,T]\times \mathbb{R} \longrightarrow \mathbb{R}$ are measurable functions, the correlation $|\rho|\leq1$ is a constant, $\hat{B}_s$ and $B_s$ are mutually independent Brownian motions.


We assume that the investor will start with an initial endowment $x>0$ at the time $t$ and try to allocate his wealth into the bond and stock according to a certain strategy at each time $s \in [t,T]$. If we denote by $\{\pi_s\}_{s \in [t,T]}$ the proportion of the wealth invested into the stock, by $\{c_s\}_{s \in [t,T]}$ the consumption decision and by $\{X_s^1\}_{s \in [t,T]}$ the wealth of the agent, the amount of money invested in the bond at the time $s$ is $(1-\pi_s)X_s^1$. In view of \eqref{eq1} and \eqref{eq2}, the non-robust investor's wealth $\{X_s^1\}_{s \in [t,T]}$ satisfies the following equation:
\begin{equation}\label{eqofwealth}
dX_s^1=X_s^1[(r+\pi_s\lambda(s,Y_s^1))ds +\pi_s\sigma_s(s,Y_s^1)dB_s]-c_sds, \quad X_t^1=x.
\end{equation}

%

We assume that the non-robust investor's SDU is a continuous time Epstein-Zin utility as illustrated in (\ref{eqofJ}) which also can be depicted by BSDE
\begin{equation}\label{eqofv}
dV_s=-f(c_s,V_s)ds + Z_s dB_s, \quad V_T =\Phi(X_T^1),
\end{equation}
where
\begin{equation}\label{eq6}
f(c,v)=\delta\theta v\Big[ (\frac{c}{((1-\gamma)v)^{\frac{1}{1-\gamma}}})^{1-\frac{1}{\psi}} - 1 \Big],
\end{equation}
$\delta >0 $ is the rate of time preference, $0< \gamma \neq 1$ is the coefficient of relative risk aversion, $0 < \psi \neq 1$ is the elasticity of intertemporal substitution (EIS), $\theta:=\frac{1-\gamma}{1-\phi}$, $\phi:=\frac{1}{\psi}$, and $\Phi(x):=\epsilon \frac{1}{1-\gamma}x^{1-\gamma}$, $\epsilon\geq0$.

As in Section 2, we only consider that $(c,\pi)$ are the state $(x,y)$ feedback controls. In the absence of model uncertainty, the non-robust investor's problem is
\begin{equation}\label{eq7}
{\bf{(P1)}} \quad \quad \max_{(c,\pi) \in \mathcal{U}^1_t(x,y)}\mathbb{E}\big[\int_t^T f(c_s, V_s)ds+\Phi(X_T^1)\big],
\end{equation}
where the $t$-admissible control set $\mathcal{U}^1_t(x,y)$, $x \in \mathbb{R}^+$, $y \in \mathbb{R}$ is a class of $(\mathscr{F}_t)_{0\leq t\leq T} $-adapted feedback strategies $c_s=\{c(s,X_s^1, Y_s^1)\}_{s\in[t,T]}$ and $\pi_s=\{\pi(s,X_s^1, Y_s^1)\}_{s\in[t,T]}$ taking values in $\mathbb{R}^+$ and $\mathbb{R}$, respectively.


\subsection{Robust optimization problem}
In this subsection, we consider a robust investor, who is not confident with the given reference model, amends his objective, and correspondingly gets the HJBI equation. A robust investor only deems the state dynamics \eqref{eq3} and \eqref{eqofwealth} as a possibly misspecified approximation. He/She wants to consider a family of alternative models which are close to the reference model. Denote by $Z^1=(X^1, Y^1)^\top$ the state vector. The reference model can then be written as
\begin{equation*}\label{eq8}
dZ_s^1=\Theta(s,Z_s^1;c_s,\pi_s)ds+\Lambda(s,Z_s^1;\pi_s)d{\bf{B}}_s,
\end{equation*}
 where $\Theta$ is the drift vector, $\Lambda $ is the volatility matrix of the state vector $Z^1$, $\Sigma=\Lambda\Lambda^\top$
 is the covariance matrix and ${\bf{B}}$ is the Brownian vector ${\bf{B}}_s=\{(B_s, \hat{B}_s)^\top\}_{s\in[0,T]}$. We denote by $\mathbb{F}^{\bf{B}}=(\mathscr{F}^{\bf{B}})_{0\leq s \leq T}$  the filtration generated by ${\bf{B}}$. The vector $\Theta$ and the matrix $\Lambda$ have the appropriate functional forms inherited from \eqref{eq3} and \eqref{eqofwealth}, i.e.
 \begin{equation*}\label{eq8a}
\Theta(s,x,y;c,\pi)=( x(r+\pi\lambda(s,y))-c, \alpha(s,y))^\top
 \end{equation*}
 and
 \begin{equation}\label{eq8b}
\Lambda(s,x,y;\pi)=\left(
  \begin{array}{cc}
   x\pi\sigma(s,y) &  0  \\
   \rho\beta(s,y) & \sqrt{1-\rho^2} \beta(s,y) \\
  \end{array}
\right).
\end{equation}

We assume that the true state evolution equation $Z$ is
 \begin{equation}\label{eq9}
dZ_s=\Theta(s,Z_s;c_s,\pi_s)ds+\Lambda(s,Z_s,\pi_s)[\Lambda(s,Z_s;\pi_s)^\top v_sds + d{\bf{B}}_s],\ \ \ \ \ Z_t=(x,y)^\top.
\end{equation}
  Here  the $t$-admissible control set $\mathcal{U}_t(x,y)$, $x \in \mathbb{R}^+$, $y \in \mathbb{R}$ is a class of
  $\mathbb{F}^{\bf{B}}$-adapted feedback strategies $c_s=\{c(s,X_s, Y_s)\}_{s\in[t,T]}$ and $\pi_s=\{\pi(s,X_s, Y_s)\}_{s\in[t,T]}$ taking values in $\mathbb{R}^+$ and $\mathbb{R}$, respectively. $\mathcal{V}_t$ is the space of all $\mathbb{F}^{\bf{B}}$-adapted processes $v=\{v_s\}_{s\in[t,T]}=\{(v_1(s), v_2(s))^\top\}_{s\in[t,T]}$ taking values in $\mathbb{R}^2$.
 The state equation is much different from that in \cite{Kraft}, i.e. there is an added nonlinear drift term $\Lambda(s,Z_s;\pi_s)\Lambda(s,Z_s;\pi_s)^\top v_s$. Usually we call $v$ the ``distortion" of the true model relative to the approximation model.

\begin{Remark}
Assume that $\mathbb{P}$ is the subjective probability measure under the reference model. Set
\begin{equation*}\label{9a}
(\frac{d\mathbb{Q}}{d\mathbb{P}})_s = \Gamma_s,
\end{equation*}
where
\begin{equation*}\label{9b}
\frac{d\Gamma_s}{\Gamma_s}=\Lambda(s,Z_s;\pi_s)^\top v_sd{\bf{B}}_s, \quad \Gamma_t=1.
\end{equation*}
If the above exponential martingale satisfies Novikov condition, it follows from Girsanov's theorem that $\mathbb{Q}$ is the probability measure under the alternative model. The interpretation is that the investor endogenously chooses an alternative belief about the dynamics of the state variables and accordingly gets the optimal consumption and portfolio policies.
\end{Remark}

The decision maker wants a decision rule which will work well across a set of distortion $v$ close to $0$. We adjust the cost functional in \eqref{eq7}
by appending a penalty term $\frac{1}{2\eta}v^\top\Sigma v$, among which $\eta$ 
is a positive parameter. In this manner, the objective becomes
\begin{equation*}\label{eq10}
{\bf{(P2)}} \quad\quad \max_{(c,\pi) \in \mathcal{U}_t(x,y)}\min_{v \in \mathcal{V}_t}I(t,(x,y);(c,\pi),v),
\end{equation*}
where
\begin{equation}\label{pz3}
I(s,(x,y);(c,\pi),v)=I_s=\mathbb{E}_s\big[\int_s^T (f(c_r,I_r)+{\frac{1}{2\eta_r}}v^\top_r \Sigma_r v_r)dr+\Phi(X_T)\big],\quad s \in [t,T].
\end{equation}

\begin{Remark}
The robust control criterion comes from that the planner does not totally trust his reference model.
The positive parameter $\eta$ indexes the amount of robustness. By setting $\eta = 0$,
the infimum is achieved by the original measure $\mathbb{P}$.
A larger value of $\eta$ strengthens the incentives to be robust.
We refer to \cite{Anderson} for further related discussion.
\end{Remark}

\begin{Remark}
The term  $\frac{1}{2\eta}v^\top \Sigma v$ in objective is actually the discounted relative entropy which quantifies the penalty term. We refer to the existing robust models such as \cite{Liu}, \cite{Maenhout06} and \cite{Skiadas} for such a penalty term, among which \cite{Liu} also focused on the Epstein-Zin utility. It is an interesting but unsolved problem if the penalty term in the recursive utility could be also recursive, and we aim to study for it in the future.
\end{Remark}

We further give specific assumptions to realize (A3).
\begin{assume}\label{gamma}
Let the coefficients $\gamma$ and $\psi$ satisfy one of the following four cases:
\begin{eqnarray*}
 &(a)& \gamma>1  \text { and } \psi>1,\\
 &(b)& \gamma>1  \text { and } \psi<1,  \text { with } \gamma\psi \leq 1,\\
 &(c)& \gamma<1  \text { and } \psi<1,\\
 &(d)& \gamma<1  \text { and } \psi>1,  \text { with } \gamma\psi \geq 1.
\end{eqnarray*}
\end{assume}
Proposition 3.2 in \cite{Kraft} has shown that (A3) is satisfied for the Epstein-Zin utility when Assumption \ref{gamma} holds.
Therefore, set
\begin{equation*}\label{eqofhatL}
\hat{\mathcal{L}}^{(c,\pi)}[w] = w_t +x(r+\pi \lambda)w_x - cw_x +\frac{1}{2}x^2\pi^2\sigma^2 w_{xx}+\alpha w_y + \frac{1}{2}\beta^2w_{yy}+x \pi \sigma\beta\rho w_{xy},
\end{equation*}
and then we can immediately obtain HJBI equation for problem {\bf{(P2)}} according to Theorem \ref{pz7}.
\begin{prop}\label{pz5}
Let $w \in C^{1,2}([0,T]\times (0,\infty)\times \mathbb{R})$ be a solution of the HJBI equation
\begin{numcases}{}\label{eq11}
\max_{(c,\pi)\in (\mathbb{R}^+,\mathbb{R})}\inf_{v \in \mathbb{R}^2}\Big\{f(c,w(t,x,y))+\hat{\mathcal{L}}^{(c,\pi)}[w](t,x,y)+v^\top_t\Sigma(t,x,y;\pi) \big(w_x(t,x,y),w_y(t,x,y)\big)^\top \nonumber\\
\ \ \ \ \ \ \ \ \ \ \ \ \ \ \ \ \ \ \ \ \ +\frac{1}{2\eta}v^\top_t\Sigma(t,x,y;\pi) v_t\Big\}=0,\nonumber\\
w(T,x,y)=\epsilon\frac{1}{1-\gamma}x^{1-\gamma},
\end{numcases}
where the aggregator $f$ is given by \eqref{eq6} and $\Sigma=\Lambda \Lambda^\top$ is given by \eqref{eq8b}.
Assume\\
(i) for any $(c,\pi)\in{\mathcal{U}}_t(x,y)$ and $v \in \mathcal{V}_t$, (\ref{eq9})--(\ref{pz3}) has a unique strong solution $(Z_s, I_s)_{t \leq s \leq T}$;\\
(ii) there exist admissible $v^* \in \mathcal{V}_t$ and $(c^*,\pi^*) \in \mathcal{U}_t(x,y)$ satisfying
\begin{equation*}
\begin{split}
 &\hat{\mathcal{L}}^{(c^*,\pi^*)}[w](s,x,y)+f(c^*, w(s,x,y))+ (v^*_s)^\top\Sigma(s,x,y;\pi^*) \big(w_x(s,x,y),w_y(s,x,y)\big)^\top\\
 &+ \frac{1}{2\eta}(v^*_s)^\top\Sigma(s,x,y;\pi^*) v^*_s=0, ~~\text{for all $(s,x,y)\in [t,T]\times \mathbb{R}^+ \times \mathbb{R}$;}
\end{split}
\end{equation*}
(iii) Assumption \ref{gamma} holds;\\
(iv) the local martingale
\begin{equation*}
\int_t^\cdot w_z(s, Z_s)^\top\Lambda(s, Z_s;\pi_s)d{\bf{B}}_s
\end{equation*}
is a true martingale for any $(c, \pi) \in \mathcal{U}_t(x,y)$ and $v \in \mathcal{V}_t$;\\
(v) for any $(c,\pi)\in{\mathcal{U}}_t(x,y)$ and $v \in \mathcal{V}_t$,  we have $\mathbb{E}[\int^T_{t}|w(s,Z_s)-I_s|ds] < \infty$. 
\\
Then
$(c^*,\pi^*,v^*)$ is an optimal control and $w$ is the value function of problem {\bf{(P2)}}.
\end{prop}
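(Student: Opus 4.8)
The plan is to recognize Proposition \ref{pz5} as a direct specialization of the abstract verification theorem (Theorem \ref{pz7}) to the robust consumption--portfolio game, and then to check that hypotheses (i)--(v) are precisely the concrete incarnations of the abstract assumptions (A1)--(A5). First I would fix the dictionary between the two frameworks: the state is $Z=(X,Y)^\top$ taking values in $\Xi:=(0,\infty)\times\mathbb{R}$ with $n=2$, $d=2$; the maximizing control is $u=(c,\pi)$ and the minimizing control is the distortion $v$; the diffusion coefficient $\sigma$ is the matrix $\Lambda$ from \eqref{eq8b}; and the drift $b$ is $\Theta+\Lambda\Lambda^\top v$, read off from \eqref{eq9}. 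The abstract aggregator becomes
\[
\tilde f\big(s,z,(c,\pi),v,j\big)=f(c,j)+\frac{1}{2\eta_s}\,v^\top\Sigma(s,z;\pi)\,v,
\]
and the terminal cost is $\Phi(x)=\epsilon\frac{1}{1-\gamma}x^{1-\gamma}$.

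The central bookkeeping step is to verify that the bracketed integrand of the HJBI equation \eqref{eq11} coincides with $\mathcal{L}^{u,v}[w]+\tilde f$. Expanding the abstract generator $\mathcal{L}^{u,v}[w]=w_t+\langle b,w_z\rangle+\frac12\mathrm{tr}(\sigma^\top w_{zz}\sigma)$ with $b=\Theta+\Lambda\Lambda^\top v$ and $\sigma=\Lambda$ splits the drift contribution into the distortion-free part $\hat{\mathcal{L}}^{(c,\pi)}[w]$ plus the cross term $\langle\Lambda\Lambda^\top v,w_z\rangle=v^\top\Sigma\,(w_x,w_y)^\top$; adding the penalty $\frac{1}{2\eta}v^\top\Sigma v$ coming from $\tilde f$ then reproduces exactly the integrand in \eqref{eq11}. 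With this identification in place, (A1) is hypothesis (i), (A4) is hypothesis (iv), and (A5) is hypothesis (v), so these three transfer verbatim.

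For the monotonicity assumption (A3), I would observe that the penalty $\frac{1}{2\eta}v^\top\Sigma v$ is independent of the utility argument $j$, so monotonicity of $\tilde f$ in $j$ reduces to monotonicity of the Epstein--Zin aggregator $f(c,\cdot)$ of \eqref{eq6}; under Assumption \ref{gamma} this is supplied by Proposition 3.2 of \cite{Kraft}, as already recorded above. For (A2), the inner infimum over $v$ is a convex quadratic, since its Hessian $\frac{1}{\eta}\Sigma$ is positive semidefinite ($\eta>0$ and $\Sigma=\Lambda\Lambda^\top\succeq0$); hence a minimizer exists, and when $\Sigma$ is nondegenerate it is the explicit measurable feedback $v^*=-\eta\,(w_x,w_y)^\top$. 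Hypothesis (ii) then furnishes an admissible maximizing pair $(c^*,\pi^*)$ attaining the outer supremum with attained value zero, which matches the nested $\arg\min/\arg\max$ structure of (A2). Having confirmed (A1)--(A5), I would invoke Theorem \ref{pz7} to conclude that $w$ is the value function of (P2), that $(c^*,\pi^*,v^*)$ is optimal, and that the minimax identity holds.

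The main obstacle I anticipate is the order-of-optimization bookkeeping in (A2): one must confirm that the single triple posited in hypothesis (ii) is consistent with the ``minimize in $v$ first, then maximize in $(c,\pi)$'' nesting, i.e. that $v^*$ is genuinely the inner minimizer associated with the maximizing choice $(c^*,\pi^*)$ rather than some unrelated joint critical point. A secondary technical point is the degeneracy of $\Sigma$ at $\pi=0$, where the stock-volatility row of $\Lambda$ vanishes and the inner minimizer fails to be unique; however, the minimum \emph{value} is still attained there, so the argument is unaffected, and hypothesis (ii) is exactly what absorbs this subtlety by directly postulating an admissible optimizer.
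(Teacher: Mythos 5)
Your proposal is correct and matches the paper's own route: the paper gives no separate proof of Proposition \ref{pz5}, asserting instead that it follows ``immediately'' from Theorem \ref{pz7}, and your argument simply spells out that specialization --- identifying $Z=(X,Y)^\top$, $u=(c,\pi)$, $b=\Theta+\Lambda\Lambda^\top v$, $\sigma=\Lambda$, the aggregator $f(c,j)+\frac{1}{2\eta}v^\top\Sigma v$, and mapping (i)--(v) onto (A1)--(A5), with (A3) supplied by Assumption \ref{gamma} via Proposition 3.2 of \cite{Kraft} exactly as the paper records. Your explicit check that the generator expansion reproduces the integrand of \eqref{eq11}, and your remark on the degeneracy of $\Sigma$ and the nesting in (A2), are more detailed than what the paper provides, but the approach is the same.
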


\subsection{Robust consumption and portfolio decisions}
In this subsection, we will get the optimal robust consumption and portfolio by FOC method, and also give the candidate $w(t,x,y)$ for the solutions of HJBI equation \eqref{eq11}.
By \eqref{eq11},  the FOC with respect to $v$ is
 \begin{equation}\label{foc}
 v^*=-\eta \partial w.
 \end{equation}
 Substituting the FOC \eqref{foc} back into the HJBI equation \eqref{eq11}, we have
 \begin{equation}\label{eq12}
 0= \sup_{(c,\pi)\in (\mathbb{R}^+,\mathbb{R})}\{f(c,w)+\hat{\mathcal{L}}^{(c,\pi)}[w] - \frac{\eta}{2}(x^2\pi^2\sigma^2w^2_x + 2x\pi\sigma\beta\rho w_xw_y + \beta^2w^2_y) \}
 \end{equation}
 with boundary condition $w(T,x,y)=\epsilon \frac{1}{1-\gamma}x^{1-\gamma}$, where the aggregator $f$ is given by \eqref{eq6}. With the aid of a constant $k$, we conjecture
\begin{equation}\label{eq13}
 w(t,x,y)=\frac{1}{1-\gamma}x^{1-\gamma}g(t,y)^k, \quad (t,x,y)\in [0,T]\times \mathbb{R}^+ \times\mathbb{R}
 \end{equation}
 with $g \in C^{1,2}([0,T]\times \mathbb{R})$ and $g(T,y)=\epsilon^{\frac{1}{k}}$.
 To explicitly solve the model, we follow Maenhout's idea (see \cite{Maenhout04}, \cite{Maenhout06}). In particular, we assume that the preference parameter $\eta$ is state-dependent and scaled by the value function:
 \begin{equation}\label{eq14}
 \eta(t,x,y)=\frac{a}{(1-\gamma)w(t,x,y)},
 \end{equation}
where $a>0$ is a constant.

 \begin{Remark}
 The parameter $a$ can be interpreted as the preference for robustness.
 \end{Remark}

 In order to make nonlinear PDE \eqref{eq12} become a linear PDE under the conjectures \eqref{eq13} and \eqref{eq14}, we need to make the following assumptions on the coefficients $k$ and $\psi$.

\begin{assume}\label{assume2}
The coefficients $k$ and $\psi$ satisfy the following:
\begin{equation*}\label{eq17}
 k=1\Big/(\frac{(1-\gamma-a)^2 \rho^2}{(\gamma+a)(1-\gamma)}+1-\frac{a}{1-\gamma})
\end{equation*}
 and
\begin{equation*}\label{eq19a}
 \psi=2-\gamma-a+\frac{(1-\gamma-a)^2}{\gamma+a}\rho^2.
\end{equation*}
\end{assume}
\begin{Remark}
The forms of coefficients $k$ and $\psi$ come from the calculus in Proposition \ref{4.2}. Due to the different state equation (\ref{eq9}) the calculus is much different from the case in Seifried and Steffensen \cite{Kraft}. As $a=0$ the forms of coefficients $k$ and $\psi$ coincide with \cite{Kraft}.
\end{Remark}

The following result provides a solution to Problem {\bf{(P2)}}.
\begin{prop}\label{4.2}
 Under Assumptions \ref{gamma}, \ref{assume2} and (i)(ii) in Proposition \ref{pz5}, the candidate for the solution of HJB equation \eqref{eq12} is
 \begin{equation}\label{pz4}
 w(t,x,y)=\frac{1}{1-\gamma}x^{1-\gamma}g(t,y)^k, \quad (t,x,y)\in [0,T]\times \mathbb{R}^+\times\mathbb{R}.
 \end{equation}
Here $g$ solves the following PDE:
\begin{equation}\label{eq18a}
 g_t(t,y)+H_1(t,y)g(t,y)+H_2(t,y)g_y(t,y)+\frac{1}{2}\beta^2(t,y)g_{yy}(t,y)+ \delta^\psi =0,
 \end{equation}
 among which
 \begin{eqnarray}
  H_1 &=&\frac{1}{k}\Big[ (1-\gamma)r+\frac{(1-\gamma)\lambda^2}{2(\gamma+a)\sigma^2}-\delta \theta \Big], \label{eq19aa}\\
  H_2 &=&\frac{(1-\gamma-a)}{(\gamma+a)}\frac{\beta\rho\lambda}{\sigma}+\alpha. \label{eq19bb}
 \end{eqnarray}

Moreover, for this $w$ in (\ref{pz4}), if we assume that 
the local martingale
$ \int_t^\cdot w_z(s, Z_s)^\top\Lambda(s, Z_s;\pi_s)d{\bf{B}}_s $
is a true martingale and $\mathbb{E}[\int^T_{t}|w(s,Z_s)-I_s|ds] < \infty$ for every $(c, \pi) \in \mathcal{U}_t(x,y)$, $v \in \mathcal{V}_t$,
the optimal consumption-wealth ratio and investment allocation are
  \begin{equation} \label{eq16}
 (\frac{c}{x})^*=\frac{\delta^\psi}{g}\ \ {\rm and}\ \  \pi^*=\frac{\lambda}{(\gamma+a)\sigma^2}+\frac{(1-\gamma-a)k\beta\rho}{(\gamma+a)(1-\gamma)\sigma}(\frac{g_y}{g}).
\end{equation}
\end{prop}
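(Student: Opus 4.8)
The plan is to verify the ansatz by direct substitution into the post-minimization HJB equation \eqref{eq12}, read off the optimal controls from the first-order conditions, and defer the optimality claim to the verification theorem (Proposition \ref{pz5}). Since the inner infimum over $v$ has already been carried out to produce \eqref{eq12} via the FOC \eqref{foc}, I only need to maximize the bracket in \eqref{eq12} over $(c,\pi)$. First I would record the derivatives of $w=\frac{1}{1-\gamma}x^{1-\gamma}g^k$: namely $w_x=x^{-\gamma}g^k$, $w_{xx}=-\gamma x^{-\gamma-1}g^k$, $w_{xy}=kx^{-\gamma}g^{k-1}g_y$, together with $w_y,w_{yy},w_t$, and note the two convenient identities $(1-\gamma)w=x^{1-\gamma}g^k$ and, from \eqref{eq14}, $\eta=a\,x^{-(1-\gamma)}g^{-k}$. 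With these, $\eta w_x^2=a\,x^{-1-\gamma}g^k$ and $\eta w_xw_y=\frac{ak}{1-\gamma}x^{-\gamma}g^{k-1}g_y$ are explicit, which is what makes every coefficient below homogeneous in $x$ of the single degree $1-\gamma$.

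The portfolio term in \eqref{eq12} is a concave quadratic in $\pi$ with leading coefficient $\tfrac12 x^2\sigma^2(w_{xx}-\eta w_x^2)=-\tfrac12(\gamma+a)\sigma^2 x^{1-\gamma}g^k<0$ (using $\gamma+a>0$ and $g>0$), so its maximizer is the ratio of the linear coefficient to twice the quadratic coefficient; substituting the derivatives collapses this to exactly the formula for $\pi^*$ in \eqref{eq16}. For consumption I would maximize $f(c,w)-c\,w_x$ over $c$; using $\theta(1-\phi)=1-\gamma$ the FOC gives $c^*/x=\delta^\psi g^{-k\psi(1-\phi)/(1-\gamma)}$, and the relation $k(\psi-1)=1-\gamma$ built into Assumption \ref{assume2} reduces the exponent to $-1$, yielding $(c/x)^*=\delta^\psi/g$ as claimed.

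The substantive step is to feed $c^*$ and $\pi^*$ back into \eqref{eq12} and show the equation linearizes. After collecting terms, every summand carries the factor $x^{1-\gamma}$ times a power of $g$; dividing by $\frac{k}{1-\gamma}x^{1-\gamma}g^{k-1}$ normalizes the $g_t$ coefficient to one. The autonomous part contributes $\frac{(1-\gamma)r}{k}g$, $\alpha g_y$ and $\frac12\beta^2 g_{yy}$; the maximized value of the portfolio quadratic contributes a $\lambda^2$ term feeding $H_1$ and a cross term feeding $H_2$; and the consumption block $f(c^*,w)-c^*w_x$ contributes $-\frac{\delta\theta}{k}g$ to $H_1$ together with a constant. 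The crux is that three distinct sources — the $g^{k-2}g_y^2$ piece of $\frac12\beta^2 w_{yy}$, the $g_y^2$ piece of the portfolio quadratic (whose cross term involves $m=\sigma\beta\rho k\frac{1-\gamma-a}{1-\gamma}$), and the penalty $-\frac\eta2\beta^2 w_y^2$ — all produce $g_y^2/g$ terms, and their coefficients sum to a multiple of $(k-1)+\frac{\rho^2 k(1-\gamma-a)^2}{(\gamma+a)(1-\gamma)}-\frac{ak}{1-\gamma}$. This bracket vanishes precisely when $k$ takes the value prescribed in Assumption \ref{assume2}; that is the single condition that turns the nonlinear PDE into the linear equation \eqref{eq18a}.

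It then remains to check the coefficients match \eqref{eq19aa}--\eqref{eq19bb} and the constant is $+\delta^\psi$. The $H_1$ and $H_2$ identities follow by direct bookkeeping; the constant works out to $\frac{\delta^\psi(\theta-(1-\gamma))}{k}$, which equals $\delta^\psi$ because $\theta-(1-\gamma)=\frac{1-\gamma}{\psi-1}=k$ — again a restatement of $k(\psi-1)=1-\gamma$. Thus the same relation in Assumption \ref{assume2} simultaneously fixes $(c/x)^*$ and the inhomogeneous term, while the boundary data $g(T,y)=\epsilon^{1/k}$ reproduces $w(T,x,y)=\frac{\epsilon}{1-\gamma}x^{1-\gamma}$. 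Finally, for the optimality assertion I would set $v^*=-\eta\,\partial w$ from \eqref{foc} and verify that $(c^*,\pi^*,v^*)$ with this $w$ satisfy hypotheses (i)--(v) of Proposition \ref{pz5}; the martingale and integrability requirements are exactly the extra assumptions imposed in the second half of the statement, so Proposition \ref{pz5} then yields that $w$ is the value function and $(c^*,\pi^*,v^*)$ is optimal. I expect the main obstacle to be purely organizational: keeping the several $g_y^2/g$ contributions straight so that the cancellation determining $k$, and the parallel collapse of the consumption and constant terms, are both recognized as precisely the two clauses of Assumption \ref{assume2}.
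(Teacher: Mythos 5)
Your proposal is correct and takes essentially the same route as the paper's own proof: substitute the ansatz \eqref{eq13}--\eqref{eq14} into \eqref{eq12}, take first-order conditions in $\pi$ and $c$, plug back, and observe that the prescribed value of $k$ cancels the $(g_y/g)^2$ terms while the relation $k(\psi-1)=1-\gamma$ (which the paper phrases as $\zeta\psi+1=0$, equivalent to the $\psi$ formula in Assumption \ref{assume2}) both reduces the consumption exponent to $-1$ and makes the inhomogeneous term equal $\delta^\psi$, with optimality deferred to Proposition \ref{pz5} under the stated martingale and integrability conditions. The only cosmetic differences are that the paper works with the ratios $w_t/w, w_x/w,\dots$ and divides through by $w$, and that it performs the consumption simplification after (rather than before) substituting back; your bookkeeping is equivalent.
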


\begin{proof}
Since
\begin{equation*}
w(t,x,y)=\frac{1}{1-\gamma} x^{1-\gamma}g^k(t,y),
\end{equation*}
we have
\begin{equation*}
\begin{aligned}
\frac{w_t}{w}& = k \frac{g_t}{g},\\
\frac{w_x}{w}& = (1-\gamma)\frac{1}{x},\\
\frac{w_{xx}}{w}& = -\gamma (1-\gamma)\frac{1}{x^2},\\
\frac{w_y}{w}& =k \frac{g_y}{g},\\
\frac{w_{yy}}{w}&  = k(k-1)\frac{g_y^2}{g^2} + k \frac{g_{yy}}{g},\\
\frac{w_{xy}}{w}&=(1-\gamma)\frac{k}{x}\frac{g_y}{g}.
\end{aligned}
\end{equation*}
Substituting \eqref{eq13} and \eqref{eq14} into \eqref{eq12} and multiplying ${1\over{w}}$ on both sides of the equation, HJB equation \eqref{eq12} becomes
\begin{equation}\label{eq15}
\begin{split}
 0=&\max_{(c,\pi)\in (\mathbb{R}^+,\mathbb{R})} \{k\frac{g_t}{g}+(1-\gamma)(r+\pi \lambda)-(1-\gamma)\frac{c}{x} -\frac{1}{2}(\gamma + a)(1-\gamma)\pi^2\sigma^2 \\
 &+\alpha k\frac{g_y}{g}+\frac{1}{2}\beta^2k(k-1-\frac{ak}{1-\gamma})(\frac{g_y}{g})^2 + \frac{1}{2}\beta^2k\frac{g_{yy}}{g}\\
 &+ (1-\gamma-a)k\pi\sigma\beta\rho\frac{g_y}{g}+ \delta\theta((\frac{c}{x})^{1-\phi}g^\zeta-1)   \},
\end{split}
\end{equation}
where $\zeta:=-\frac{k}{\theta}$. The FOC of HJB equation \eqref{eq15} with respect to $\pi$ and $\frac{c}{x}$ leads immediately to  the optimal investment allocation and consumption-wealth ratio
\begin{equation}\label{optimalpi}
\pi=\frac{\lambda}{(\gamma+a)\sigma^2}+\frac{1-\gamma -a}{(\gamma+a)(1-\gamma)}\frac{\beta \rho k}{\sigma}\frac{g_y}{g}
\end{equation}
and
\begin{equation}\label{optimalc/x}
\frac{c}{x}=(\delta g^\zeta)^\psi.
\end{equation}
Then substituting \eqref{optimalpi} and \eqref{optimalc/x} into HJB equation \eqref{eq15} and checking the coefficient of the term $(\frac{g_y}{g})^2$, we have
\begin{equation*}
\begin{aligned}
& -\frac{1}{2}(\gamma +a) (1-\gamma )\sigma^2 (\frac{1-\gamma -a}{(\gamma+a)(1-\gamma)}\frac{\beta \rho k}{\sigma})^2 + \frac{1}{2}\beta^2k(k-1-\frac{ak}{1-\gamma}) \\
&+ (1-\gamma -a)\sigma \beta \rho k (\frac{1-\gamma -a}{(\gamma+a)(1-\gamma)}\frac{\beta \rho k}{\sigma})\\
=& \frac{1}{2}\beta^2 k (\rho^2 k \frac{(1-\gamma-a)^2}{(\gamma +a )(1-\gamma)} + k-1 -\frac{a}{1-\gamma} k).
\end{aligned}
\end{equation*}
Under our assumption
\begin{equation*}
 k=1\Big/(\frac{(1-\gamma-a)^2 \rho^2}{(\gamma+a)(1-\gamma)}+1-\frac{a}{1-\gamma}),
\end{equation*}
the term $(\frac{g_y}{g})^2$ disappears. Hence 
\begin{equation}\label{eq18}
g_t(t,y)+H_1(t,y)g(t,y)+H_2(t,y)g_y(t,y)+\frac{1}{2}\beta^2(t,y)g_{yy}(t,y)+ \frac{\phi\theta\delta^\psi}{k}g^{\zeta\psi+1}(t,y)=0,
\end{equation}
where $H_1$ and $H_2$ are given in \eqref{eq19aa} and \eqref{eq19bb}. Let $\psi=2-\gamma-a+\frac{(1-\gamma-a)^2}{\gamma+a}\rho^2$, which is equivalent to  $\zeta\psi+1=0$. Then PDE \eqref{eq18} reduces to PDE \eqref{eq18a}. The proof is complete.
\end{proof}
\begin{Remark}
If the coefficients in \eqref{eq18a} satisfy the conditions of Heath and Schweizer \cite{Heath}, the solution $g$ admits the Feynman-Kac representation
 \begin{equation*}\label{eq19b}
 g(t,y)=\delta^\psi H(t,y)+\epsilon^{\frac{1}{k}}h(t,y;T),
 \end{equation*}
 where $H(t,y):=\int_t^T h(t,y;s)ds$, $h(t,y;s):=\tilde{E}_{t,y}[e^{\int_t^sH_1(Y_u)du}]$
 for $t \in [0,T]$, $y \in \mathbb{R}$, and the expectation $\tilde{E}_{t,y}[\cdot]$ is taken with respect to the equivalent measure $\tilde{\mathbb{P}}$, for which $Y$ is conditioned on $Y_t=y$ and has drift $H_2$ instead of $\alpha$ in (\ref{eq3}).
 Here we also note that $h$ satisfies the PDE
 \begin{equation}\label{eq19c}
h_t+H_1h+H_2h_y+\frac{1}{2}\beta^2h_{yy}=0\quad \text{ on } [0,s]\times \mathbb{R}
 \end{equation}
 with the terminal condition $h(s,y;s)=1$.
\end{Remark}

 \section{Application: Heston model}

In this section, we focus on the stochastic volatility model, in which the volatility follows a square-root form suggested by Heston \cite{Heston}. This kind of model is named by Heston and appears in many mathematical finance problems. We name but a few existing studies for the non-robust consumption-portfolio problem of Heston model. For the time-separable utilities, it has been studied by Kraft \cite{kra} without consumption involved in the utility and by Liu \cite{Liu2} with the assumption of zero correlation. For the recursive
utilities, it has been studied by Chacko and
Viceira \cite{cha} with unit EIS and by Xing \cite{Xing} in the case of $\gamma>1$ and $\psi>1$. Here setting $\Phi(x)\equiv0$ in (\ref{eqofv}) we consider the robust consumption-portfolio problem of Heston model for the Epstein-Zin utility without the assumption of zero correlation.



 To be more precise, the dynamics of the risky asset is given by
 \begin{equation*}\label{eq20}
 dP(s)=P(s)[(r+\bar{\lambda}Y_s^1)ds+\sqrt{Y_s^1}dB_s]
 \end{equation*}
 with the constant $r>0$, and the state process follows
\begin{equation*}\label{eq21}
dY_s^1=(\nu-m Y_s^1)ds+\bar{\beta}\sqrt{Y_s^1}(\rho dB_s + \sqrt{1-\rho^2}d\hat{B}_s),
\end{equation*}
where $\nu, m, \bar{\beta} >0$ are all constants and $Y_t^1=y$ is set to be nonnegative. By comparison theorem (see e.g. Theorem 1.5.5.9 in \cite{Monique}), $Y_s^1\geq0$ for all $s\geq t$ a.s. Then in this case equation \eqref{eq9} becomes
 \begin{equation}\label{robustwealthofx}
dX_s=X_s[(r+\pi_s\bar{\lambda}Y_s - C_s)ds + (\pi_s^2 v_1(s)X_s Y_s+ \pi_s \rho\bar{\beta}v_2(s)Y_s )ds+\pi_s\sqrt{Y_s}dB_s],
\end{equation}
\begin{equation*}\label{robustwealthofy}
dY_s=(\nu - m Y_s + \rho\bar{\beta}\pi_sv_1(s) X_s Y_s+\bar{\beta}^2 v_2(s) Y_s)ds +\bar{\beta}\sqrt{Y_s}(\rho dB_s+\sqrt{1-\rho^2}d\hat{B}_s).
\end{equation*}
%

In the Heston model, we not only give the candidate of the optimal solution, but explicitly  check the martingale condition of Proposition \ref{pz5}, i.e. the local martingale $\int_t^\cdot w_z^\top(s, Z_s)\Lambda(s, Z_s;\pi_s)d{\bf B}_s $
is a true martingale and $\mathbb{E}[\int_t^T|w(s,Z_s)-I_s|ds] < \infty$ for every admissible control $(c, \pi; v)$.
For this, recalling $\mathcal{U}_t(x,y)$ and $\mathcal{V}_t$  appearing   below (\ref{eq9}),
we define the following admissible control set for some $ K\in\mathbb{R}^1$ and nonnegative $a_1$ :
\begin{equation*}
\begin{aligned}
 \mathcal{A}_t:=\{ &(c,\pi;v):\ (c,\pi)\in\mathcal{U}_t(x,y),\ v\in\mathcal{V}_t,\ \frac{c}{x}(s,y) \leq \frac{1}{T-s}+by+K;\ \pi\ is\ nonnegative\\
 &and\ bounded\ with\ upper\  bound\  K_{\pi};\ v=(v_1, v_2),\ v_1=-\frac{a_1}{x}, \ and\ v_2 \ is \ nonpositive \\
 & and \ bounded \ \ a.e.\ ;
  X_s\geq 0,\ s\in[t,T]\}. 
\end{aligned}
\end{equation*}
Obviously, when $(c,\pi;v)\in\mathcal{A}_t$, $Y_s\geq0$ for all $s\geq t$ a.s by comparison theorem again. Moreover, we need some added assumptions for the coefficients:\\
(H1) ~$1 < \gamma < \min\{k+2,{1\over q}+1\}$ where $q>2$ such that $q-2$ is sufficiently small, and $\rho \leq 0, ~\bar{\lambda} >0$,\\
(H2) ~$m>\max\{\frac{1-\gamma-a}{\gamma+a}\bar{\lambda}\bar{\beta}\rho, \bar{\beta}{K}_\pi(2(\gamma-1)+a_1)
\}$,\\
(H3) ~$4(\gamma-1)\bar{\beta}^2\tilde{b} < (m-\bar{\beta}{K}_\pi(2(\gamma-1)+a_1))^2$.\\
Here the constants ${K}_\pi$, $\tilde{b}$, $\kappa$ and $b$ are defined by
\begin{equation*}
\begin{aligned}
\tilde{b}&:= b+(\frac{2\gamma-1}{2}+a_1)K_\pi^2, ~ \ K_\pi:=\frac{\bar{\lambda}}{\gamma+a}+\frac{1-\gamma-a}{(\gamma+a)(1-\gamma)}k\bar{\beta}|\rho|\frac{b}{\kappa},\\
{\kappa}&:=m-\frac{1-\gamma-a}{\gamma+a}\rho \bar{\lambda}\bar{\beta}, ~ \ b:=-\frac{1}{2k}\frac{1-\gamma}{\gamma+a}\bar{\lambda}^2.
\end{aligned}
\end{equation*}



\begin{Remark}\label{remark}
In the following Theorem \ref{th1}, we will  show that $(\frac{c}{X})^*, \pi^* \in \mathcal{A}_t$. If  $a_1=a$, then $v_1(t)=v_1^*(t)$, and we can also show that $v_2^*\leq0$ and bounded. In fact,
\begin{equation*}
v_2^*(t,x,y)=-\eta(t,x,y) w_y(t,x,y) =\frac{ak}{\gamma-1}\frac{g_y(t,y)}{g(t,y)}.
\end{equation*}
Due to $a, k>0$, $\gamma >1$ and expression of $g$ in the following Theorem \ref{th1}, we have
\begin{equation}\label{pz19}
\frac{g_y(t,y)}{g(t,y)}
=-\frac{\int_t^Te^{A(t,s)-B(t,s)}B(t,s)ds}{\int_t^Te^{A(t,s)-B(t,s)}ds}\leq 0
\end{equation}
and $v_2^*\leq0$. Moreover,  by \eqref{pz19},
\begin{equation}\label{eqofg1}
|\frac{g_y(t,y)}{g(t,y)}| \leq |B|,\ t \in [0,T], y \in [0,\infty),
\end{equation}
where $|B|:=\max_{s,t \in [0,T], t \leq s}|B(t,s)| $ is bounded which will be proved in \eqref{eqofab}. So $v_2^* \in \mathcal{A}_t$.


\end{Remark}

In this section, the following result provides an explicit solution to the robust consumption-portfolio problem of an investor with Heston model.

\begin{theorem}\label{th1}
Assume that (i)(ii) in Proposition \ref{pz5},  Assumptions \ref{gamma}, \ref{assume2} and (H1)-(H3) hold. Then the investor's value function $w$ in the Heston stochastic volatility model has the representation \eqref{eq13}, where
\begin{equation*}\label{eqofgg}
g(t,y)=\delta^\psi \int_t^Te^{A(t,s)-B(t,s)y}ds
\end{equation*}
and the functions $A$ and $B$ are explicitly given by
 \begin{equation}\label{eqab}
 \begin{split}
 A(t,s)= &\frac{4b\nu}{{\kappa}^2-d^2} \Big[\ln(({\kappa}+d)e^{ds}-({\kappa}-d)e^{dt})-\ln(2d)-\frac{1}{2}(({\kappa}+d)s-({\kappa}-d)t)\Big]\\
&+\frac{(1-\gamma)r-\delta\theta}{k}(s-t),\\
B(t,s)= &2b\frac{e^{d(s-t)}-1}{e^{d(s-t)}({\kappa}+d)-{\kappa}+d},
 \end{split}
\end{equation}
where $d:=\sqrt{{\kappa}^2+ 2b\bar{\beta}^2}$, $b$ and $\kappa$ are defined in (H3).
The optimal portfolio strategy and consumption-wealth ratio read
\begin{equation}\label{solution2}
\pi^*_s=\frac{\bar{\lambda}}{(\gamma+a)}+\frac{(1-\gamma-a)k\bar{\beta}\rho}{(\gamma+a)(1-\gamma)}\frac{g_y(s,Y_s)}{g(s,Y_s)}\text{ and  } (\frac{c_s}{X_s})^*=\frac{\delta^\psi}{g(s,Y_s)}, ~s \in [t,T].
\end{equation}
\end{theorem}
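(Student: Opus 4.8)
The plan is to reduce Theorem~\ref{th1} to the general result of Proposition~\ref{4.2} by computing the Heston coefficients explicitly, to solve the resulting linear PDE for $g$ in closed form via an affine ansatz, and finally to discharge the two standing martingale/integrability hypotheses by moment estimates tailored to the square-root variance process. First I would substitute $\lambda(s,y)=\bar\lambda y$, $\sigma(s,y)=\sqrt y$, $\alpha(s,y)=\nu-my$ and $\beta(s,y)=\bar\beta\sqrt y$ into the coefficients \eqref{eq19aa}--\eqref{eq19bb}. A direct computation gives $H_1=\frac{(1-\gamma)r-\delta\theta}{k}-by$ and $H_2=\nu-\kappa y$, where $b$ and $\kappa$ are exactly the constants defined in (H3); since $\tfrac12\beta^2=\tfrac12\bar\beta^2 y$ and $\Phi\equiv 0$ forces $g(T,y)=0$, the PDE \eqref{eq18a} reduces to a linear parabolic equation whose coefficients are affine in $y$.

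Next I would use the Feynman--Kac representation $g(t,y)=\delta^\psi\int_t^T h(t,y;s)\,ds$ noted after Proposition~\ref{4.2}, with $h$ solving the homogeneous equation \eqref{eq19c} and $h(s,\cdot;s)=1$. Inserting the affine ansatz $h(t,y;s)=e^{A(t,s)-B(t,s)y}$ and matching the constant and linear-in-$y$ terms separates the PDE into a scalar Riccati equation $B_t=\tfrac12\bar\beta^2 B^2+\kappa B-b$ with $B(s,s)=0$, and a linear equation $A_t=\nu B-\frac{(1-\gamma)r-\delta\theta}{k}$ with $A(s,s)=0$. The quadratic $\tfrac12\bar\beta^2 B^2+\kappa B-b$ has discriminant governed by $d=\sqrt{\kappa^2+2b\bar\beta^2}$, so the standard closed form for a Riccati solution yields the stated $B(t,s)$; integrating $A_t$ against this $B$ and using $\kappa^2-d^2=-2b\bar\beta^2$ produces the logarithmic expression for $A(t,s)$ in \eqref{eqab}. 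Verifying that the two formulas indeed solve these ODEs is routine bookkeeping.

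The optimal controls then follow by substitution: plugging $\lambda=\bar\lambda y$, $\sigma^2=y$ and $\beta=\bar\beta\sqrt y$ into \eqref{eq16} collapses the common $y$-dependence and reproduces \eqref{solution2}, together with $(c/X)^*=\delta^\psi/g$. At this stage I would also confirm, as indicated in Remark~\ref{remark}, that the candidate pair lies in $\mathcal A_t$: because (H2) gives $\kappa>0$ and hence $d>\kappa>0$, the denominator of $B$ stays positive, so $B$ is bounded on the compact region $\{t\le s\}\subseteq[0,T]^2$; this yields $|g_y/g|\le|B|$ as in \eqref{eqofg1}, whence $\pi^*$ is bounded and, since $\rho\le0$, nonnegative, while $v_2^*\le0$ and bounded.

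The hard part will be verifying hypotheses (iv) and (v) of Proposition~\ref{pz5} for this explicit $w$, namely that $\int_t^\cdot w_z^\top\Lambda\,d\mathbf B_s$ is a true martingale and that $\mathbb E[\int_t^T|w(s,Z_s)-I_s|\,ds]<\infty$ for every admissible $(c,\pi;v)\in\mathcal A_t$. I would reduce both to moment bounds on $w(s,Z_s)=\frac{1}{1-\gamma}X_s^{1-\gamma}g(s,Y_s)^k$ and on the quadratic-variation integrand. Under $\mathcal A_t$ one has $v_1 X=-a_1$, so the drift of \eqref{robustwealthofx} is affine in $Y$ with coefficients bounded by the constraints defining $\mathcal A_t$; applying It\^o to $X_s^{1-\gamma}$ expresses the required quantities as exponential functionals of the form $\mathbb E\big[\exp\big(\int_t^s(\cdots)Y_u\,du+\int_t^s(\cdots)\sqrt{Y_u}\,dB_u\big)\big]$. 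The crux is then to bound exponential moments of the integrated variance $\int_t^s Y_u\,du$ of the distorted square-root process: conditions (H2)--(H3) are precisely the Feller/Novikov-type inequalities guaranteeing these moments are finite, while the range of $\gamma$ and the auxiliary exponent $q$ in (H1) keep the relevant powers inside the admissible window after a H\"older splitting. Propagating these estimates through the affine structure of $Y$ is where essentially all the analytic difficulty lies; once they are in hand, Proposition~\ref{pz5} applies and identifies $w$ as the value function with $(c^*,\pi^*,v^*)$ optimal, while the closed-form computation of $g$, $A$, $B$ and the policies is comparatively mechanical.
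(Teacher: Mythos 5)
Your Steps for the closed form and the martingale condition track the paper's proof closely: the paper likewise reduces \eqref{eq19c} in the Heston case to the affine ansatz $h=e^{A-B y}$, obtains exactly your Riccati/linear ODE system \eqref{eq24} for $B$ and $A$, reads off the policies from \eqref{eq16}, checks $\pi^*\in[\bar\lambda/(\gamma+a),K_\pi]$ and $(c/X)^*\le \frac{1}{T-s}+by+\frac12\nu b(T-s)$, and then verifies the true-martingale property via the bounds \eqref{eqofg1}, \eqref{eqofab} and the moment estimate $\mathbb{E}[X_s^{q(1-\gamma)}]\le C(T-s)^{q(1-\gamma)}$, which rests on exponential moments of the integrated square-root process under (H1)--(H3), just as you describe.

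However, there is a genuine gap in your treatment of hypothesis (v). You propose to ``reduce both to moment bounds on $w(s,Z_s)$ and on the quadratic-variation integrand,'' but moments of the state $(X_s,Y_s)$ only control the term $\mathbb{E}[\int_t^T|w(s,Z_s)|\,ds]$. The other term, $\mathbb{E}[\int_t^T |I_s|\,ds]$, cannot be handled this way: $I_s$ is not a function of the current state; it is defined recursively through \eqref{pz3}, with the non-Lipschitz Epstein--Zin aggregator $f(c_r,I_r)$ appearing inside its own conditional expectation. The paper needs a separate idea here: it views $I$ as the solution of a BSDE whose generator $\bar g(t,z,u,v,I)=f(c,I)+\frac{1}{2\eta}v^\top\Sigma v$ is monotone in $I$ (this uses $\gamma>1$ and Assumption \ref{gamma}), and then invokes the $L^p$ a priori estimate for BSDEs with monotone drivers (Proposition 3.2 of Briand--Delyon--Hu--Pardoux--Stoica) to get
\begin{equation*}
\mathbb{E}\Big[\int_t^T|I_s|^p\,ds\Big]\le C\,\mathbb{E}\Big[\int_t^T\Big(\tfrac{1}{2\eta(s,X_s,Y_s)}v_s^\top\Sigma(s,X_s,Y_s,\pi_s)v_s\Big)^p ds\Big],
\end{equation*}
after which the penalty term is bounded by $Cx^{1-\gamma}y$ and controlled by a H\"older splitting into moments of $X^{1-\gamma}$ and of $Y$ (the latter via comparison with a standard CIR process and the Pitman--Yor Laplace-transform result). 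Without this BSDE step, or some substitute a priori estimate on the recursive utility itself, your verification of (v) does not close; this is the one nontrivial ingredient of the paper's Step 3 that your outline is missing. A minor secondary omission: admissibility of $(c^*,\pi^*)$ in $\mathcal{A}_t$ also requires the lower bound on $g$ giving $(c/X)^*\le\frac{1}{T-s}+by+K$, which your ``routine bookkeeping'' does not mention explicitly.
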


\begin{proof}
The proof is divided into three steps. We refer to Appendix C in Kraft, Seifried and Steffensen \cite{Kraft} for the proof of Steps 1 and 2, so we only give the arguments related to our robust model and omit the similar arguments in the first two steps.

Step 1:  Explicit solution of \eqref{eq18a}.
In the Heston model, the PDE \eqref{eq19c} for $h$ becomes
 \begin{equation}\label{eq22}
 h_t+\frac{1}{k}\big[(1-\gamma)r -\delta \theta + \frac{1-\gamma}{2(\gamma+a)}\bar{\lambda}^2y \big]h+\big[\nu+ (\frac{1-\gamma -a}{\gamma + a}\rho\bar{\lambda}\bar{\beta}-m)y\big]h_y+\frac{1}{2}\bar{\beta}^2yh_{yy}=0.
 \end{equation}
 We conjecture that the solution $h$ has the following form:
 \begin{equation}\label{eq23}
 h(t,y;s)=e^{A(t,s)-B(t,s)y}
 \end{equation}
 with $h(s,y;s)=1$.
 Substituting \eqref{eq23} into \eqref{eq22}, we get
 \begin{equation}\label{eq24}
 \begin{split}
 &dB(t,s)=\Big[(m- \frac{1-\gamma-a}{\gamma+a}\rho\bar{\lambda}\bar{\beta})B+ \frac{1}{2}\bar{\beta}^2B^2+ \frac{1-\gamma}{2k(\gamma+a)}\bar{\lambda}^2\Big]dt,\quad B(s,s)=0,\\
 & dA(t,s) =\Big[\nu B -\frac{1}{k}[(1-\gamma)r-\delta\theta]\Big]dt, \quad A(s,s)=0.
 \end{split}
 \end{equation}
 It is easy to get that \eqref{eqab}  solves above ODE \eqref{eq24}.
  The candidate optimal strategy \eqref{solution2} follows from the results \eqref{eq16}, and we shall check that $(c^*, \pi^*)$ is admissible.
  For $\pi^*$, by its explicit expression, together with $\rho <0$ and $g_y \leq 0$, we can get
\begin{equation*}
\frac{\bar{\lambda}}{\gamma+a} \leq \pi^*(s,y) \leq K_\pi,\ \ \ s\in [t,T],\ y \in [0,\infty),
\end{equation*}
where $K_\pi$ is defined below (H3).
For $c^*$, we claim
\begin{equation*}
(\frac{c}{X})^*(s,y) \leq \frac{1}{T-s} + by +\frac{1}{2}\nu b (T-s),\ \ \ s \in [t,T],\ y \in [0,\infty),
\end{equation*}
whose proof is similar to Lemma C.5 in \cite{Kraft}.
%
%

Step 2: Verification of the condition that $\int_t w_z(s, Z_s)^\top\Lambda(s, Z_s;\pi_s)d{\bf{B}}_s$ is a true martingale for $(c,\pi;v)\in\mathcal{A}_t$.
From \eqref{eqab}, $A$ and $B$ satisfy
\begin{equation}\label{eqofab}
\begin{aligned}
0 \leq & B(t,s) \leq \frac{b}{\kappa}, ~\text{ for all } t \leq s,\\
-\frac{b}{\kappa}\nu T -|\frac{(1-\gamma)r-\delta\theta}{k}| T \leq &A(t,s) \leq |\frac{(1-\gamma)r-\delta\theta}{k}| T,   ~\text{ for all } t \leq s,
\end{aligned}
\end{equation}
and similar to Proposition C.6 in \cite{Kraft}, we have
\begin{equation}\label{pz11}
\mathbb{E}[X_s^{q(1-\gamma)}] \leq C(T-s)^{q(1-\gamma)},
\end{equation}
for a positive and sufficiently small $q-2$. Here and in the rest of the proof, $C$ is a generic constant depending only on given parameters and its values may change from line to line. Then we attain the goal of Step 2 by
a similar proof as Proposition C.3 in \cite{Kraft}, based on the state equation of $Z$ in \eqref{eq9} and estimates of  \eqref{eqofg1}, \eqref{eqofab} and \eqref{pz11}.

Step 3: Verification of the condition that $\mathbb{E}[\int_t^T|w(s,Z_s)-I_s|ds] < \infty$ for $(c,\pi;v)\in\mathcal{A}_t$. 
We first show
\begin{equation}\label{pz17}
\mathbb{E}[\int_t^T|w(s,X_s,Y_s)|]<\infty,
\end{equation}
 for all $(c,\pi;v) \in \mathcal{A}_t$. In fact,
\begin{equation}\label{pz15}
\mathbb{E}[\int_t^T |w(s,X_s,Y_s)|ds]
=\mathbb{E}[\int_t^T |\frac{1}{1-\gamma}X_s^{1-\gamma}g^k(s,Y_s)|ds].
\end{equation}
By \eqref{robustwealthofx} and $v_1=-\frac{a}{x}$ in $\mathcal{A}_t$, we have
\begin{equation*}
X_t=x\exp{\{ \int_0^t [(r+\pi_s\bar{\lambda}Y_s - C_s) - \pi_s^2 aY_s+ \pi_s \rho\bar{\beta}v_2(s)Y_s -\frac{1}{2}\pi_s^2Y_s]ds+\int_0^t \pi_s\sqrt{Y_s}dB_s\}}\geq 0.
\end{equation*}
On the other hand, since $B(t,s)\geq0$ and $Y_s\geq0$, $0\leq t \leq s$, we have
$0 \leq  g(t,y) \leq e^{|A|}(T-t),\ t \in [0,T],\ y \in [0,\infty)$, where $|A|:=\max_{s,t \in [0,T],\ t \leq s}|A(t,s)|$. Hence  we can obtain from \eqref{pz11} that, for a positive and sufficiently small $q-2$,
\begin{equation*}
\begin{aligned}
& \mathbb{E}[X_s^{1-\gamma} g^k(s,Y_s)]\\
\leq & C \mathbb{E}[X_s^{1-\gamma}(T-s)^k]\\
\leq & C \{\mathbb{E}[X_s^{q(1-\gamma)}(T-s)^{qk}]\}^\frac{1}{q}\\
\leq & C (T-s)^{1-\gamma+k}.
\end{aligned}
\end{equation*}
Moreover, noticing from (H1) that $1-\gamma+k >-1$, we have
\begin{equation}\label{pz16}
\int_t^T\mathbb{E}[X_s^{1-\gamma} g^k(s,Y_s)]ds \leq C \int_t^T(T-s)^{1-\gamma+k}ds < \infty.
\end{equation}
Consequently, \eqref{pz17} follows from \eqref{pz15} and \eqref{pz16}.

Next we prove
\begin{equation}\label{pz20}
\mathbb{E}[\int_t^T I_sds]< \infty,
\end{equation}
for all $(c,\pi; v) \in \mathcal{A}_t$.
Note that the cost functional $I$ defined in \eqref{pz3} is the solution of the following BSDE
\begin{equation*}
dI_t=-\bar{g}(t,Z_t,u_t,v_t,I_t)dt+ L_t dB_t,\ I_T=0,
\end{equation*}
where
\begin{equation*}
\bar{g}(t,z,u,v,I)=f(c,I)+\frac{1}{2\eta(t,x,y)}v^\top_t\Sigma(t,x,y,\pi)v_t,
\end{equation*}
and
\begin{equation*}
z=(x,y)^\top, u=(c,\pi)^\top, v=(v_1,v_2)^\top.
\end{equation*}
By the fact that $\gamma >1$ and Assumption 3.1, $\bar{g}$ is monotonic with respect to  $I$,
\begin{equation*}\label{pz10}
(I_1-I_2)(\bar{g}(s,z,u,v,I_1)-\bar{g}(s,z,u,v,I_2)) \leq |\delta\theta||I_1-I_2|^2,
\end{equation*}
and
\begin{equation*}
\frac{I}{|I|}1_{|I|\neq 0}\bar{g}(s,z,u,v,I)\leq |\delta \theta|I + |\frac{1}{2\eta(s,x,y)}v^\top_s\Sigma(s,x,y,\pi)v_s|.
\end{equation*}
According to Proposition 3.2 in \cite{Briand}, we have
\begin{equation}\label{pz13}
\mathbb{E}[\int_t^T|I_s|^p]ds \leq C \mathbb{E}[\int_t^T ( \frac{1}{2\eta(s,X_s,Y_s)}v^\top_s\Sigma(s,X_s,Y_s,\pi_s)v_s)^p ds],\ \ \  p>1.
\end{equation}
Substituting the definition of $\eta$, $\Sigma$, $w$ into $\frac{1}{2\eta(s,x,y)}v^\top_s\Sigma(s,x,y,\pi)v_s$,
we have
\begin{equation*}
\begin{aligned}
&\frac{1}{2\eta(s,x,y)}v^\top_s\Sigma(s,x,y,\pi)v_s\\
=&\frac{1}{2a_1}g^k(s,y)(a_1^2\pi^2-2a_1\pi\rho
\bar{\beta}v_2+\bar{\beta}^2v_2)x^{1-\gamma}y\\
\leq& C x^{1-\gamma}y.
\end{aligned}
\end{equation*}
The last inequality based on the facts that the admissible control  $\pi$ and $v_2$ are bounded, $g$ is bounded, $\rho \leq 0$ and $\bar{\beta}>0$. Then
\begin{equation}\label{pz9}
\begin{aligned}
&\mathbb{E}[\int_t^T (\frac{1}{2\eta(s,X_s,Y_s)}v^\top_s\Sigma(s,X_s,Y_s,\pi_s)v_s)^pds]\\
\leq & C \mathbb{E}[\int_t^T (X_s^{1-\gamma}Y_s)^pds]\\
\leq & C \{\mathbb{E}[\int_t^T X_s^{2p(1-\gamma)}ds]\}^{1/2}\{\mathbb{E}[\int_t^TY_s^{2p}ds]\}^{1/2},\ \ \ p>1.
\end{aligned}
\end{equation}

To get the integration of $Y$, first note
\begin{equation*}
dY_s=(\nu-k_1(s) Y_s)ds+ \bar{\beta}\sqrt{Y_s}(\rho dB_s+\sqrt{1-\rho^2}d\hat{B}_s),
\end{equation*}
where
\begin{equation*}
k_1(s)=m+a_1\rho\bar{\beta}\pi_s-\bar{\beta}^2 v_2(s).
\end{equation*}
Since
\begin{equation*}
\pi {\rm\text \ is \ bounded\  by\ } K_\pi~{\rm\text and}~ v_2\leq0,
\end{equation*}
we have
\begin{equation*}
k_1(s)>m-a_1\bar{\beta}K_\pi.
\end{equation*}
Define
\begin{equation*}
k_2:=m-a_1\bar{\beta}K_\pi.
\end{equation*}
By (H2) we know $k_2 >0$, and thus
\begin{equation*}
\nu-k_1(s)y < \nu-k_2 y ~~\text{for all}  ~s \in [t,T] ,\  y \in [0,\infty).
\end{equation*}
Hence by the comparison theorem, it follows that
\begin{equation}\label{comparisonofy}
Y_s \leq \tilde{Y}_s ~~\text{for all } s \in [t,T],
\end{equation}
where $\tilde{Y}$ is the solution of the following
\begin{equation*}\label{eqoftildey}
d\tilde{Y}_s=(\nu-k_2 \tilde{Y}_s)ds+ \bar{\beta}\sqrt{\tilde{Y}_s}(\rho d\tilde{B}_s+\sqrt{1-\rho^2}d\hat{B}_s),~~\tilde{Y}_t =y.
\end{equation*}
By Pitman-Yor Lemma for Laplace transform of the integrated
square-root process in \cite{pit-yor}, we have $\sup_{t \in [0,T]}\mathbb{E}[\tilde{Y}_t^{p_1}] < \infty$ for any $p_1>1$, which together with \eqref{comparisonofy} leads to $0 \leq  \sup_{t \in [0,T]}\mathbb{E}[Y_t^{p_1}]  < \infty$ for any $p_1>1$.
Hence for a positive and sufficiently small $p-1$,
\begin{equation}\label{expectationofy3}
\mathbb{E}[\int_t^TY_s^{2p}ds] < \infty.
\end{equation}
As for the integration of $X$, by \eqref{pz11}
we have, for a positive and sufficiently small $q-2$,
\begin{equation*}\label{add1}
\int_t^T \mathbb{E}[X_s^{q(1-\gamma)}]ds \leq C\int_t^T (T-s)^{q(1-\gamma)} ds.
\end{equation*}
By (H1), $\gamma < \frac{1}{q}+1$, i.e. $q(1-\gamma)+1 >0$. Then we know that   $\int_t^T (T-s)^{q(1-\gamma)} ds$ is integrable, and
\begin{equation*}
\int_t^T (T-s)^{q(1-\gamma)} ds=\frac{1}{q(1-\gamma)+1}T^{q(1-\gamma)+1}.
\end{equation*}
Thus, for a positive and sufficiently small $p-1$,
\begin{equation}\label{pz22}
\mathbb{E}[\int_t^T X_s^{2p(1-\gamma)}ds]\}< \infty.
\end{equation}
By  \eqref{pz13}, \eqref{pz9}, \eqref{expectationofy3} and \eqref{pz22}, for a positive and sufficiently small $p-1$, we have
 \begin{equation*}
 \mathbb{E}[\int_t^T |I_s|^pds] < \infty,
 \end{equation*}
which implies (\ref{pz20}).


Therefore, Step 3 is derived from (\ref{pz17}) and (\ref{pz20}).
\end{proof}

Next, we compare the results of robust consumption-portfolio with those of non-robustness. For this, set
\begin{equation}\label{eq24a}
\begin{array}{ccccc}
 \gamma=1.4,                        & \delta=0.08, & \rho=-0.5,        &r=0.05,  \\
\sqrt{\bar{y}}=0.15,  & \bar{\lambda}\sqrt{\bar{y}}=0.07,  & m=5,    & \bar{\beta}=0.25. &
\end{array}
\end{equation}



\begin{center}
\includegraphics[width = 12cm, height = 7cm]{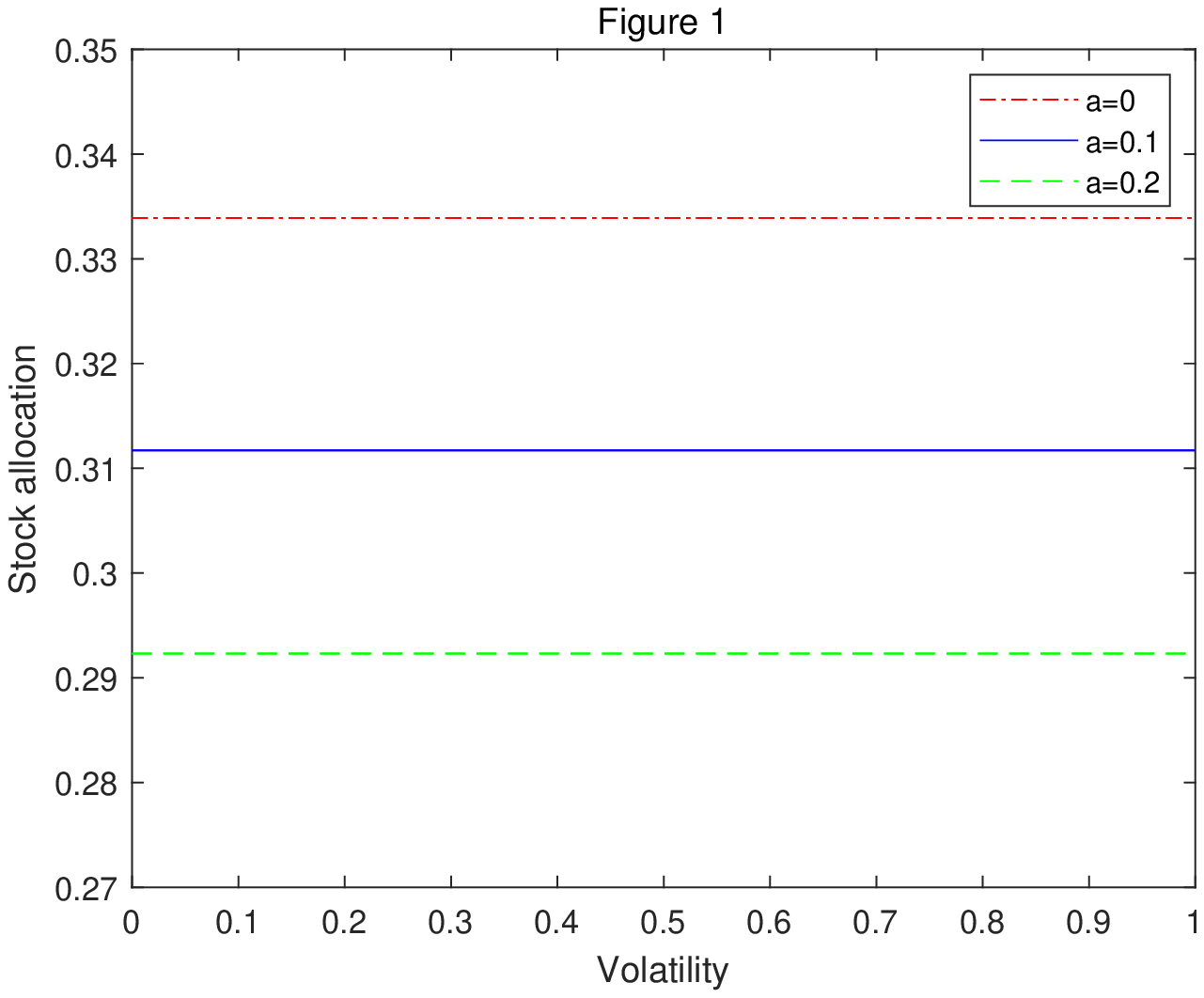}
\end{center}
\begin{center}
\includegraphics[width = 12cm, height = 7cm]{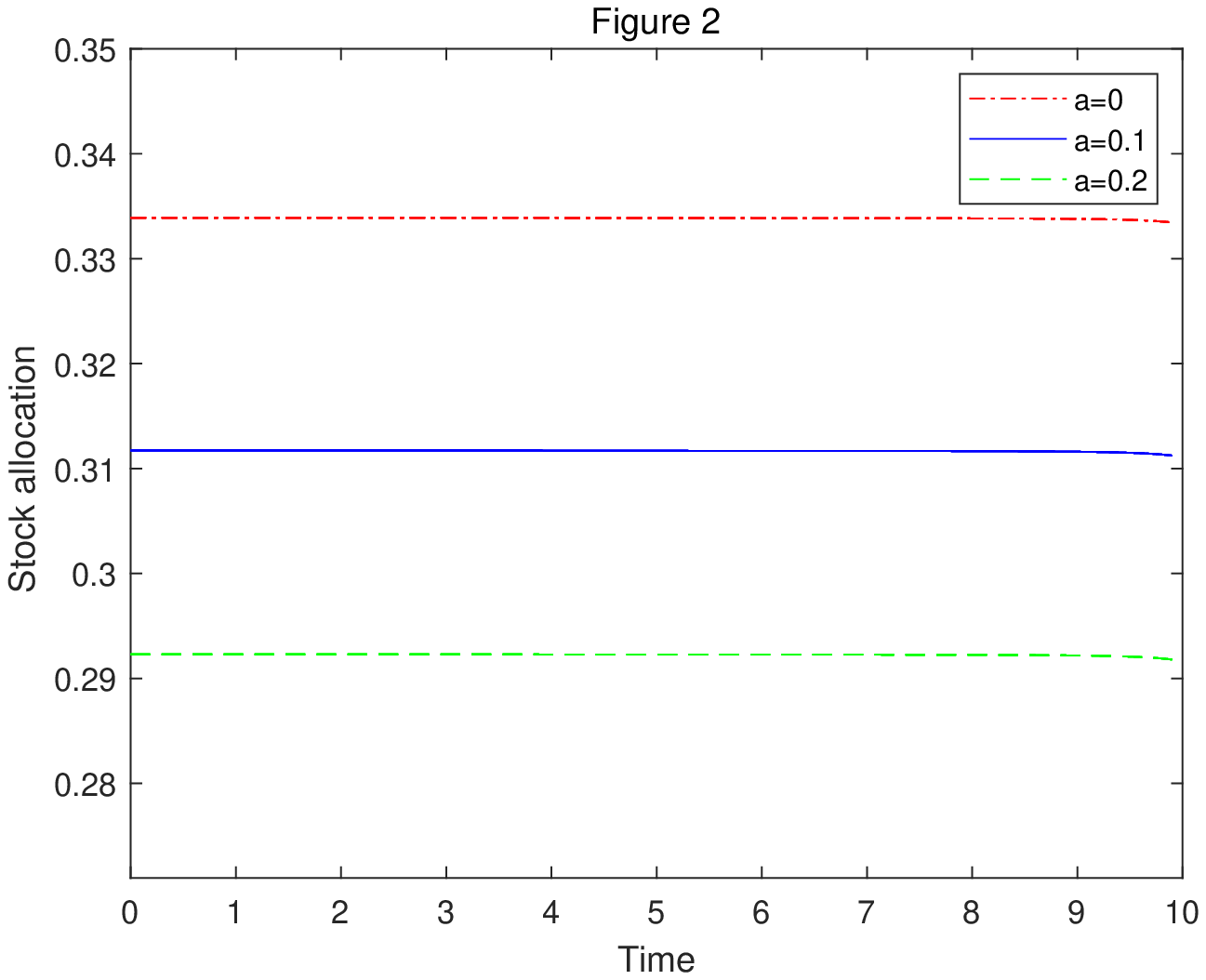}
\end{center}
\begin{center}
\includegraphics[width = 12cm, height = 7cm]{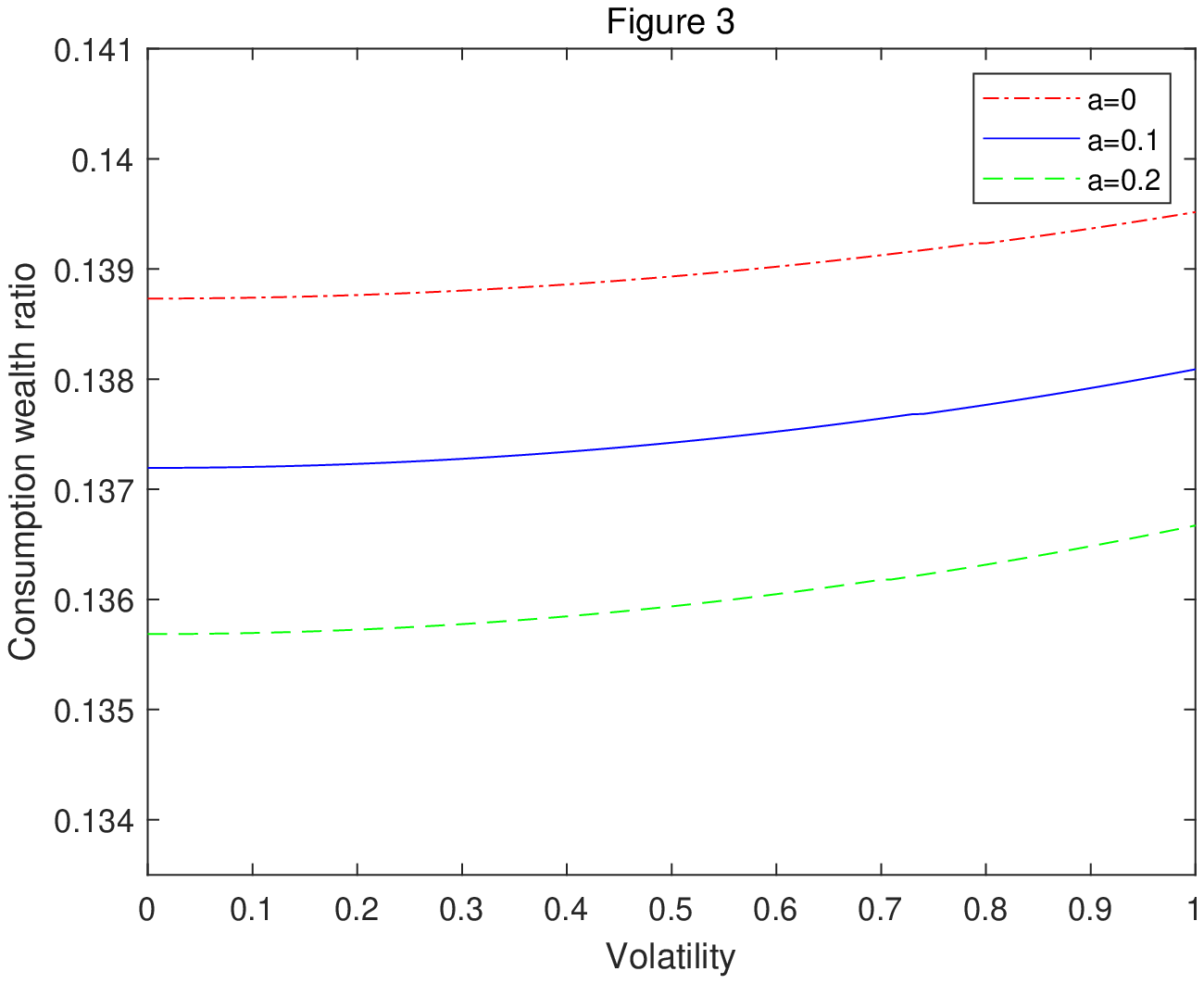}
\end{center}

Figures $1$, $2$ and $3$ demonstrate the results of Theorem \ref{th1} for a time horizon of $T = 10$ years and the parameters in (\ref{eq24a}). Also we provide the portfolio and consumption strategies with robust ($a=0.1$, $a=0.2$) and non-robust ($a=0$) investors.

Figure $1$ shows the optimal stock allocation as a function of volatility at time $t = 0$, while Figure $2$ shows the optimal stock allocation as a function of time when the volatility is given  $0.2$.
In Figure $1$, we find in both cases of non-robustness ($a = 0$) and robustness ($a = 0.1$, $a=0.2$), the optimal stock allocation is insensitive to the volatility at initial time and only decreases very slightly in volatility. However, more robust investor is more cautious. He/She puts lower optimal stock allocation than less robust investors.

In Figure $2$, we consider the investment in the horizon of $10$ years if the volatility is $0.2$. From Figures $1$ and $2$, we find not only at the beginning of stock allocation, but also in the following operation, more robust investors put lower optimal stock allocation than the non-robust investors.

Figure $3$ demonstrates the difference of the optimal consumption-wealth ratio between the robust and non-robust investors as a function of volatility at time $t = 0$. 
We find that, at the beginning of consumption, less robust investors are willing to consume.

According to above analysis, what will robust investors do while they put lower proportion of the wealth into the consumption and stock? In fact, robust investors are willing to put their money into the riskless financial markets, such as banks and bond markets. We find their behaviors showed in our figures are identical with our common sense. Robust investors are those who do not totally trust their reference model and prefer to choose the worst scenario as their utilities. Consequently, robust investors are more pessimistic than the others and their behaviors are more conservative than the non-robust investors.

\begin{Remark}
We provide 3 corresponding figures to show the trend of the risk-averse investor by different $\gamma$ with Heston model in comparison with the robust investor in Figures 1-3 by setting
\begin{equation*}
\begin{array}{ccccc}
 a=0,                        & \delta=0.08, & \rho=-0.5,        &r=0.05,  \\
\sqrt{\bar{y}}=0.15,  & \bar{\lambda}\sqrt{\bar{y}}=0.07,  & m=5,    & \bar{\beta}=0.25. &
\end{array}
\end{equation*}
\begin{center}
\includegraphics[width = 8cm, height = 4.67cm]{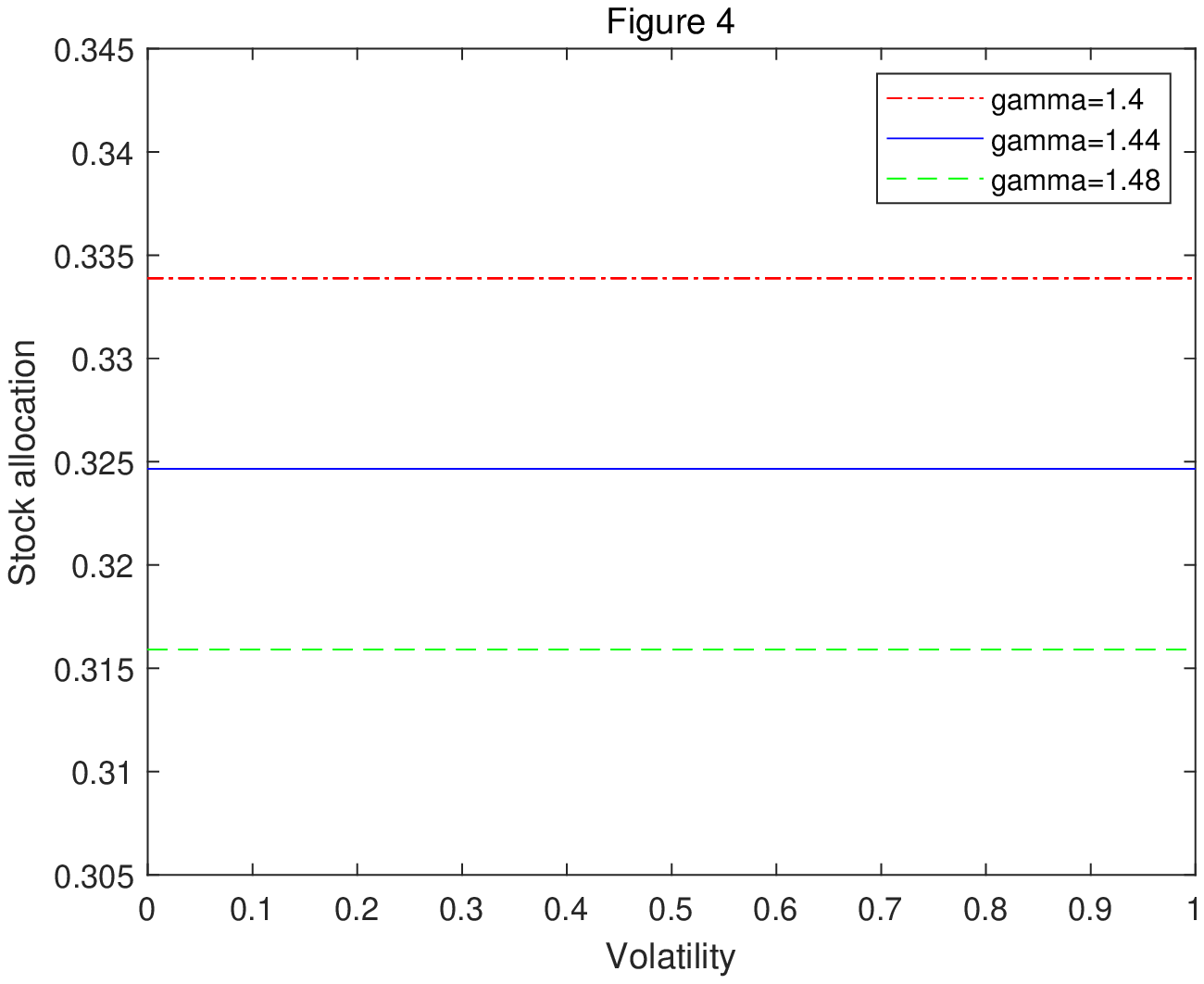}\includegraphics[width = 8cm, height = 4.67cm]{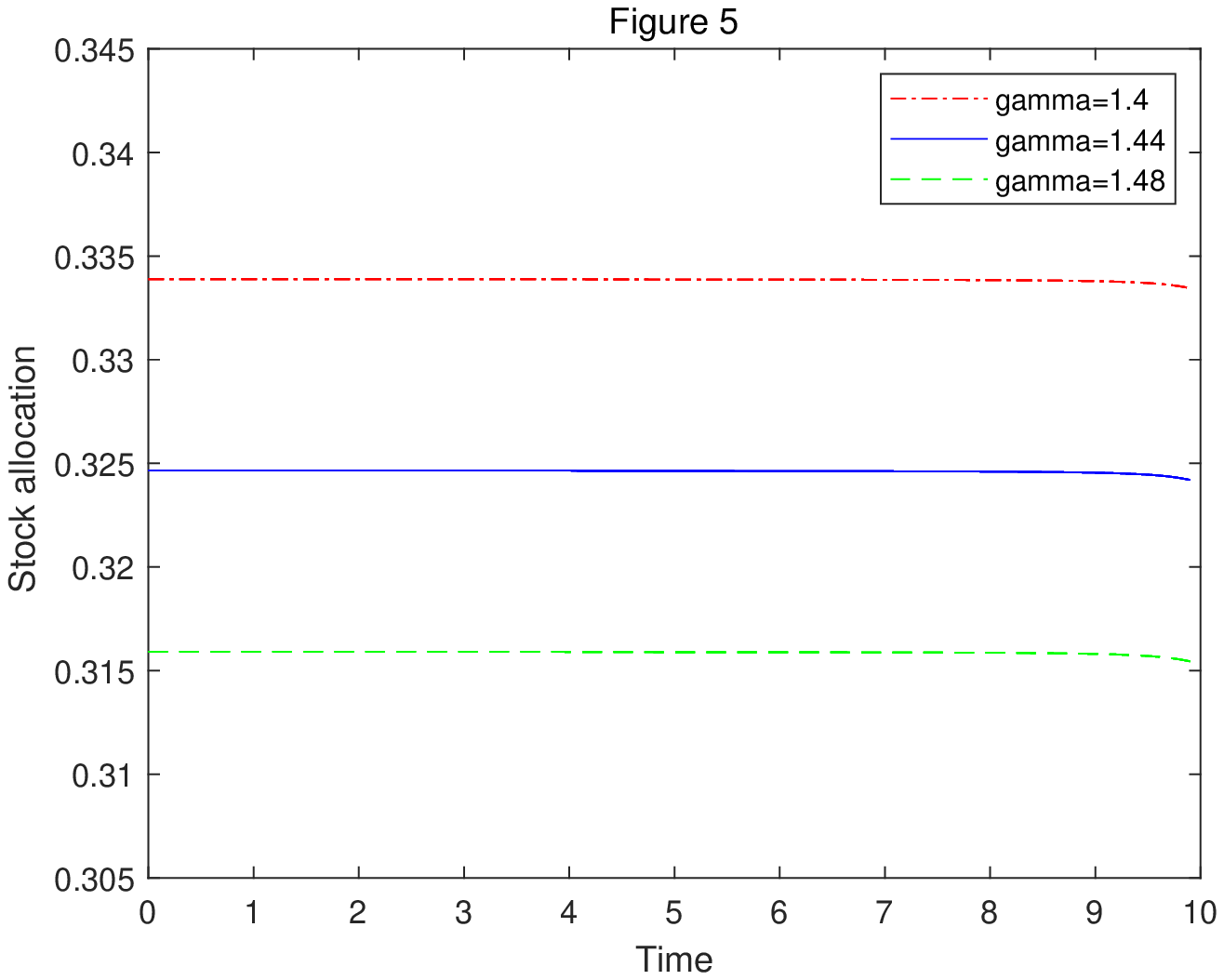}
\end{center}
\begin{center}
\includegraphics[width = 8cm, height = 4.67cm]{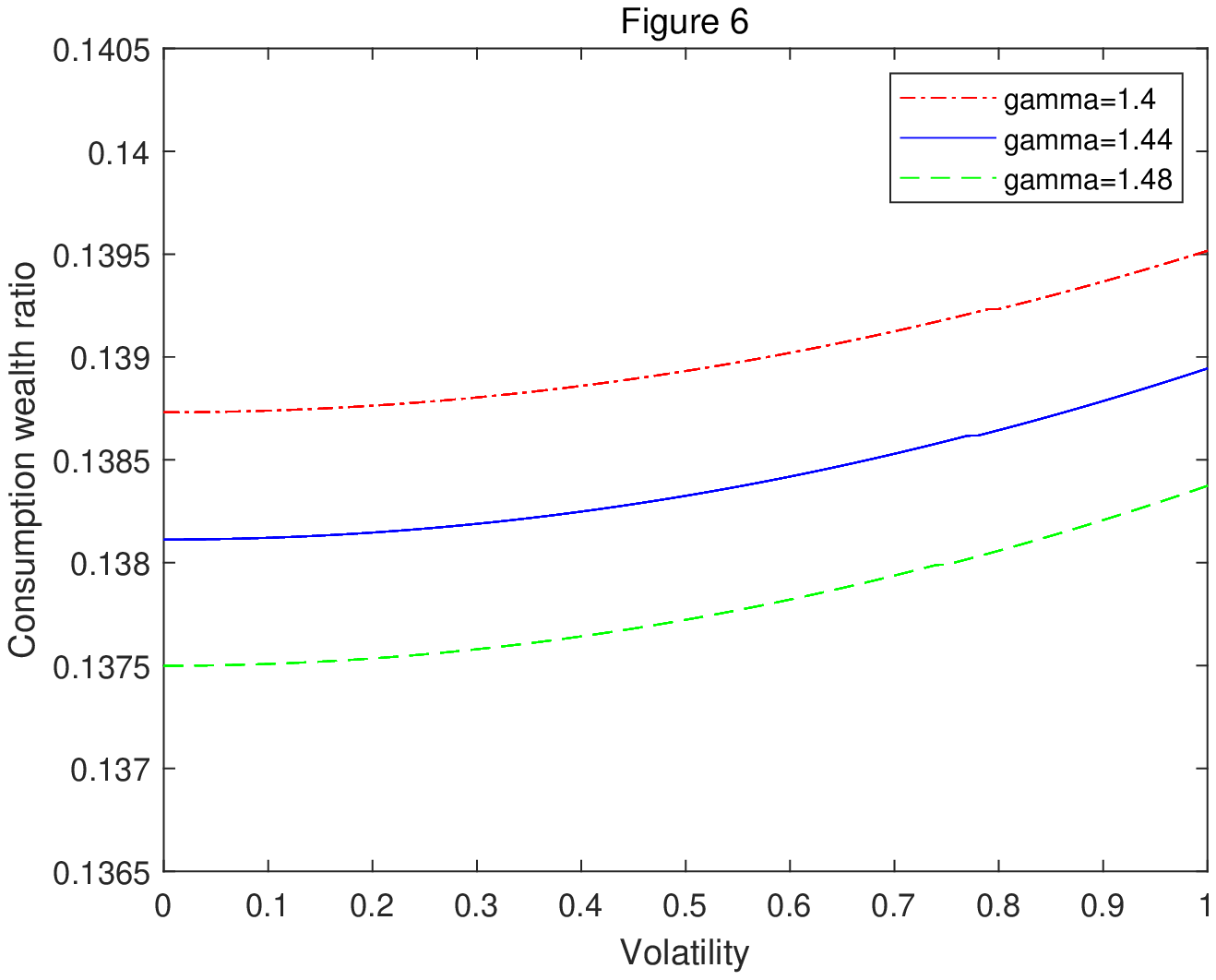}
\end{center}
The simulation results show that the outcomes of the robust investor have very similar trends to 
those of the risk-averse investor, although there maybe exist some little differences due to parameter settings. In fact, it is pointed out in \cite{Maenhout06} and \cite{Liu} that the portfolio rule of a robust investor with risk aversion $\gamma$ and preference for robustness $a$ is identical to the one of a non-robust investor with risk aversion $\gamma+a$ for the mean-reverting model. We believe from the simulation that a similar result can be obtained  for our model as the mean-reverting model.
\end{Remark}


\section*{Acknowledgements} The authors would like to thank the anonymous referees for their valuable comments which improve this paper much.


\begin{thebibliography}{99}
\bibitem{Ait} Ait-Sahalia Y, Matthys F (2019)
Robust consumption and portfolio policies when asset prices can jump. {\em J. Econom. Theory} 179:1-56.
\bibitem{Anderson} Anderson EW, Hansen LP, Sargent TJ (2000) Robustness, detection and the price of risk.  {\em Working Paper}.
\bibitem{Bergen} Bergen V, Escobar M, Rubtsov A, Zagst R,(2018) Robust multivariate portfolio choice with stochastic covariance in the presence of ambiguity. {\em Quant. Finance} 18: 1265-1294.
\bibitem{bo-ca}  Bo L, Capponi A (2017) Robust optimization of credit portfolios. {\em Math. Oper. Res.} 42:30-56.
\bibitem{Briand} Briand Ph, Delyon B, Hu Y, Pardoux E, Stoica L (2003) $L^p$ solutions of backward stochastic differential equations. {\em Stochastic Process. Appl.} 108:109-129.

\bibitem{cha} Chacko, G, Viceira, L (2005) Dynamic consumption and portfolio choice with stochastic volatility in incomplete
markets. {\em Rev. Financ. Stud.} 18:1369-1402.

\bibitem{Cox} Cox J, Huang CF (1989) Optimal consumption and portfolio policies when asset prices follow a diffusion process. {\em J. Econom. Theory} 49:33-83.


\bibitem{Schied} Daniel HH, Schied A (2006) Robust utility maximization in a stochastic factor model. {\em Statist. Decisions} 24:109-125.
\bibitem{Duffie1992} Duffie D, Epstein L (1992) Stochastic differential utility. {\em Econometrica} 60:353-394.
\bibitem{Duffie1994} Duffie D, Skiadas C (1994) Contiunous-time security pricing: a utility gradient approach. {\em J. Math. Econom.} 23:107-131.

\bibitem{el} El Karoui N, Peng S, Quenez MC (2001) A dynamic maximum principle for the optimization of recursive utilities under constraints. {\em Ann. Appl. Probab.} 11:664-693.
\bibitem{Epstein} Epstein L, Zin S (1989) Substitution, risk aversion and the temporal behavior of consumption and asset returns: a theoretical framework. {\em Econometrica} 57:937-965.
\bibitem{Escobar} Escobar M, Gotz B, Seco L, Zagst R(2010) Pricing a CDO on stochastically correlated underlyings, {\em Quant. Finance}10:265-277.
\bibitem{Fouque}  Fouque J,  Pun C,  Wong H (2016)Portfolio Optimization with Ambiguous Correlation and Stochastic Volatilities {\em SIAM J. Control Optim.} 54(5): 2309-2338.



\bibitem{Han}Han B, Wong H(2020) Optimal investment and consumption problems under correlation ambiguity,{\em IMA J. Manag. Math.} 31:69-89.
\bibitem{Heath} Heath D, Schweizer M (2000) Martingales versus PDEs in finance: an equivalence result with examples. {\em J. Appl. Probab.} 37:947-957.

\bibitem{Heston} Heston SL (1993) A closed-form solution for options with stochastic volatility with applications to bond and currency options. {\em Rev. Financ. Stud.} 6:327-343.

%

\bibitem{Ismail} Ismail A, Pham H (2019)  Robust Markowitz meanvariance portfolio selection under ambiguous covariance matrix. {\em Math. Finance}, 29:174-207



\bibitem{Monique} Jeanblanc M, Yor M, Chesney M (2009) Mathematical Methods for Financial Markets, Springer Finance.

\bibitem{Karatzas} Karatzas I (1989) Optimization problems in the theory of continuous trading. {\em SIAM J. Control Optim.} 27:1221-1259.

\bibitem{kra} Kraft, H (2005) Optimal portfolios and Heston's stochastic volatility model: an explicit solution for power
utility. {\em Quant. Finance} 5:303-313.

\bibitem{Kraft} Kraft H, Seifried FT, Steffensen M (2013) Consumption-portfolio optimization with recursive utility in incomplete markets. {\em Finance Stoch.} 17:161-196.

\bibitem{Kraft2} Kraft H, Seifeiling T, Seifried FT (2017) Optimal consumption and investment with Epstein-Zin recursive utility. {\em Finance Stoch.} 21:187-226.


\bibitem{Krep} Krep D, Porteus E (1978) Temporal resolution of uncertainty and dynamic choice theory. {\em Econometrica} 46:185-200.

\bibitem{Liu} Liu H. (2010) Robust consumption and portfolio choice for time varying investment opportunities. {\em Ann. Finance} 6: 435-454.
\bibitem{Liu2} Liu J (2007) Portfolio selection in stochastic environments. {\em Rev. Financ. Stud.} 20:1-39.

\bibitem{Maenhout04} Maenhout P (2004) Robust portfolio rules and asset pricing. {\em Rev. Financ. Stud.} 17:951-983.
\bibitem{Maenhout06} Maenhout P (2008) Robust portfolio rules and detection-error probabilities for a mean-reverting risk premium. {\em J. Econom. Theory}. 128:136-163.
\bibitem{Markowitz} Markowitz H (1952) Portfolio selection. {\em J. Finance} 7:77-91.
\bibitem{Merton1969} Merton RC (1969) Life time portfolio selection under uncertainty: the continuous time case. {\em Rev. Econom. Statist.} 51:247-257.
\bibitem{Merton1971} Merton RC (1971) Optimum consumption and portfolio rules in a continuous time model. {\em J. Econom. Theory} 3:373-413.

\bibitem{pit-yor} Pitman JW, Yor M (1982) A decomposition of Bessel bridges. {\em Probab. Theory Related Fields} 59:425-457.

\bibitem{Pun2}  Pun, Chi Seng(2020)
G-expected utility maximization with ambiguous equicorrelation. {\em Quant. Finance} publish online.

\bibitem{Schroder} Schroder M, Skiadas C (1999) Optimal consumption and portfolio selection with stochastic differential utility. {\em J. Econom. Theory} 89:68-126.
\bibitem{Schroder2} Schroder M, Skiadas C (2003) Optimal lifetime consumption-portfolio selection with stochastic differential utility. {\em Stochastic Process. Appl.} 108:155-202.

\bibitem{Skiadas} Skiadas C (2003) Robust control and recursive utility. {\em Finance Stoch.} 7:475-489.

\bibitem{Xing} Xing H (2017) Comsumption investment optimization with Epstein-Zin utility in incomplete martets. {\em Finance Stoch.} 21:227-262.
\bibitem{Yan} Yan TJ, Han BY, Pun, CS, Wong HY(2020) Robust time-consistent mean-variance portfolio selection problem with multivariate stochastic volatility. {\em Math. Financ. Econ.} 11:137-160.
\bibitem{Yang} Yang Z, Liang GC, Zhou C (2019) Constrained portfolio-consumption strategies with uncertain parameters and borrowing cost. {\em Math. Financ. Econ.} 13:393-427.
\end{thebibliography}
\end{document}